\newtheorem {pro}{Proposition}[section]
\newtheorem {thm}[pro]{Theorem}
\newtheorem {cor}[pro]{Corollary}
\newtheorem{lem}[pro]{Lemma}
\theoremstyle{definition}
 \newtheorem {rem}[pro]{Remark}
\newtheorem {dfn}[pro]{Definition}
\newtheorem {obs}[pro]{Observation}
\newcommand{\pb}{\overline{p}}
\newcommand{\qb}{\overline{q}}
\newcommand{\tb}{\overline{t}}
\newcommand{\R}{\mathbb{R}}
\newcommand{\N}{\mathbb{N}}
\newcommand{\C}{\mathbb{C}}
\newcommand{\K}{\mathbb{K}}
\newcommand{\kk}{\mathbb{K}}
\newcommand{\Pp}{\mathbb{P}}
\title {Geometry of polynomial mappings at infinity via intersection homology}
\author[A. and G. Valette]{Anna Valette and Guillaume Valette}
\address[A. Valette]{Instytut Matematyki Uniwersytetu
Jagiello\'nskiego, ul. S \L ojasiewicza, Krak\'ow, Poland}
\email{anna.valette@im.uj.edu.pl}
\address[G. Valette]
{Instytut Matematyczny PAN, ul. \'Sw. Tomasza 30, 31-027 Krak\'ow,
Poland} \email{gvalette@impan.pl}
\keywords{}
\thanks{This research was done durring the authors stay in the Fields Institute in 2009}
\subjclass{28A78, 32B10, 32B20 }
\begin{document}
\maketitle

\begin{abstract}
For given polynomial map $F:\C^2\to\C^2$ with nonvanishing jacobian we asociate a variety whose homology or intersection homology describes
the geometry of singularities at infinity of this map.
\end{abstract}

\section{Introduction}
In the study of geometrical or topological properties of polynomial mappings, the set of points at which those maps fail to be proper plays an important role.
We call the smallest set $A$ such that the map $F:X\setminus F^{-1}(A)\to Y\setminus A $ is proper, the asymptotic set.  For instance, recall the famous Jacobian conjecture, in 1939 O.H. Keller \cite{kel}
asked whether any polynomial map $F:\C^n\to\C^n$ with nowhere vanishing Jacobian is a polynomial automorphism.  Since then many mathematicians tried to answer this question. In the topological approach, the main difficulty is to verify if such a map is proper. It is then natural to study the topology of  the asymptotic set.
In the 90's of the previous century Z. Jelonek studied properties of this set and obtained very important results. We recall briefly some of them in the next section.
The starting point of the study of  the asymptotic set is the simple observation that the asymptotic set of a given map $F:\kk^n\to \kk^m$ is the image under canonical projection of the $\overline{\hbox{graph}(F)}\setminus \hbox{graph}(F)$, where the closure is taken in the $\Pp^n\times\kk^m$, here and in the sequel $\kk$ stands for the field of complex or real numbers. This gives many important consequences. But if one is interested to keep the assumption of non-vanishing Jacobian,   compactifications have to be avoided. For instance a compactification drastically affect the volume of the  gvien Euclidian space.  If the jacobian is constant then  locally the mapping  $F$ preserves the volume. Thus, most of the information is lost in the compactification.

Our aim is to give a new approach for the Jacobian conjecture. We will construct a pseudomanifold $N_F$ associated to the given polynomial map $F:\C^n\to\C^n$. In the case $n=2$ we will then prove that the map $F$ with non-vanishing jacobian is not proper iff  the homology  or the intersection homology  of $N_F$ is nontrivial.

This provides a new and original approach of the Jacobian conjecture. In general, singularities of mappings are much more difficult to handle than singularities of sets. We reduce the study of an application $F:\C^n \to \C^n$ to the study of a singular semialgebraic set $N_F$.

Our approach is of metric nature. The significant advantege  is to be able to investigate the singularities, without looking at singularities themselves, but just at the regular points nearby.  For instance, it was proved in \cite{v} that the $L^\infty$ cohomology  of the regular locus of a given subanalytic pseudomanifold (which may be proved to be a subanalytic  bi-Lipschitz invariant of the regular locus) is a topological invariant of the pseudomanifold. Being able to work beside the singularities could prove to be useful to investigate singularities at infinity (where we hardly can go). In this paper we shall relate $L^\infty$ cohomology of the regular locus of the constructed pseudomanifold $N_F$ to the behavior of $F$ at infinity (Corollary \ref{cor homology propre}).  This result relies on the de Rham theorem proved in \cite{v}, which yields and isomorphism between $L^\infty$ cohomology and intersection cohmology in the maximal perversity.

The idea is to transfer the problem to differential geometry so as to reformulate the problem later in terms of differential equations. Most of the usual geometric approaches only emphasize that the Jacobian is nonzero rather than the fact that it is constant. 

It seems indeed more useful to find intersection homology than homology. The reason is that we are also able to prove that the set $N_F$ is a stratified pseudomanifold with only even codimensional strata. The main feature of intersection homology is that M. Goresky and R. MacPhserson were able to show in their fundamental paper that their theory satisfies Poincar\'e duality for stratified pseudomanifold and the case of pseudomanifolds stratified by even codimensional strata is the most important one if one is interested in the middle perversity. The case of the maximal perversity is also of importance since, as we said above,  it makes it possible to relate the geometry of $F$ at infinity to the $L^\infty$ cohomology of $N_F$.



\section{Preliminaries and basic definitions}

In this section we set our background. We begin by recalling of useful properties of the asymptotic variety, then we recall the concept of the intersection homology and we finish by recalling the $L^\infty$ homology.
 
\subsection{Notations and conventions.}

Given a subset $X$ of $\R^n$ we denote by $C_i(X)$ the group of
$i$-dimensional singular chains with the coefficients in $\R$, if
$c$ is an element of $C_i(X)$, we denote by $|c|$ its support.
By $Reg(X)$ and $Sing(X)$ we denote respectively the regular and singular locus of the set $X$.
\subsection{The Jelonek set.}
For a polynomial map $F:\K^n\to \K^m$, we denote by $J_F$ the set
of points at which the map $F$ is not proper, i.e.
$J_F=\{y\in\K^m\,\,\,\mbox{\it such that}\,\,\, \exists
\{x_k\}\subset \K^n, \,\,|x_k|\to\infty,\,\, F(x_k)\to y\}$, and
call it the asymptotic variety or the Jelonek set of $F$. The
geometry of this set was studied  by Jelonek in the series of
papers \cite{j1,j2,j3}. He obtained a nice description of this set,
gave also an upper bound of the degree of  this set. For the
details and the applications of these results we refer to the
works of Jelonek.  In our paper we will need the following
powerful theorem.
\begin{thm}[Jelonek \cite{j2}]\label{jelonek}
Let $F:\C^n\to\C^m$ be generically finite polynomial map then the
set $J_F$ is a $n-1$ pure dimensional $\C$-unirulled algebraic
variety or an empty set.
\end{thm}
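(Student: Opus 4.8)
The plan is to pass to the closure of the graph of $F$ in a partial compactification of the source and to study the projection to the target; the set $J_F$ will appear as the image of the part of that graph lying over infinity. \emph{Step 1 (reduction to a projection).} Fix the standard embedding $\C^n\subset\Pp^n$ with hyperplane at infinity $H=\Pp^n\setminus\C^n$, and let $\Gamma\subset\Pp^n\times\C^m$ be the Zariski closure of $\mathrm{graph}(F)$. Since the first projection maps $\mathrm{graph}(F)$ isomorphically onto $\C^n$, the variety $\Gamma$ is irreducible of dimension $n$; and since $\mathrm{graph}(F)$ is already closed in $\C^n\times\C^m$, we have $\Gamma\cap(\C^n\times\C^m)=\mathrm{graph}(F)$, so $\Gamma_\infty:=\Gamma\cap(H\times\C^m)=\Gamma\setminus\mathrm{graph}(F)$. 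The projection $p:\Gamma\to\C^m$ is proper because $\Pp^n$ is complete. A short argument with sequences, using density of $\mathrm{graph}(F)$ in $\Gamma$, identifies $J_F=p(\Gamma_\infty)$: if $x_k\to\infty$ and $F(x_k)\to a$ then, after passing to a subsequence, $(x_k,F(x_k))$ converges in $\Gamma$ to some $(\xi,a)$ with $\xi\in H$; conversely every $(\xi,a)\in\Gamma_\infty$ is a limit of points $(x_k,F(x_k))$ for which necessarily $\|x_k\|\to\infty$. Since $p$ is a proper morphism, $J_F=p(\Gamma_\infty)$ is a closed algebraic subvariety of $\C^m$; and as $\Gamma_\infty$ is a proper closed subvariety of the irreducible variety $\Gamma$ (it misses $\mathrm{graph}(F)$), we get $\dim J_F\le\dim\Gamma_\infty\le n-1$. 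We may thus assume $J_F\neq\emptyset$ from now on.

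\emph{Step 2 ($\C$-uniruledness).} Fix $a\in J_F$ and $(\xi,a)\in\Gamma_\infty$. A standard curve selection argument on $\Gamma$ (for instance via its normalization) produces a holomorphic arc $t\mapsto(\gamma(t),F(\gamma(t)))$ in $\Gamma$, defined on a disc, with value $(\xi,a)$ at $t=0$ and lying in $\mathrm{graph}(F)$ for $t\neq0$. Writing the first coordinate in an affine chart of $\Pp^n$ near $\xi$ and clearing denominators shows $\gamma(t)=t^{-N}Q(t)$ for some integer $N\ge1$ and some holomorphic map $Q$ with $Q(0)\neq0$; truncating $Q$ to high order we may take it polynomial. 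Then $\|\gamma(t)\|\to\infty$ and $F(\gamma(t))\to a$ as $t\to0$, the latter meaning that the Laurent expansion in $t$ of $F(\gamma(t))$ has vanishing polar part and constant term $a$. The point is now to realize $a$ as a member of a one-parameter polynomial family of such limits: the polynomial arcs $t\mapsto t^{-N}Q(t)$ of bounded degree form an affine space, the polar coefficients of $F$ along such an arc are polynomial functions of the coefficients of $Q$, so requiring them to vanish cuts out an affine variety $\mathcal{A}$; on the open dense subset of $\mathcal{A}$ where the arc genuinely runs to infinity the assignment ``constant Laurent coefficient of $F$ along the arc'' is a polynomial map with image in $J_F$, and it extends by continuity to $\mathcal{A}$ with image in $J_F$. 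One then exhibits, through the point of $\mathcal{A}$ corresponding to our $\gamma$, a non-constant polynomial curve $\C\to\mathcal{A}$ along which this limit map is non-constant; its image is a non-constant polynomial image of $\C$ inside $J_F$ passing through $a$. Hence $J_F$ is $\C$-uniruled.

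\emph{Step 3 (purity of dimension).} It remains to show that every irreducible component of $J_F$ has dimension exactly $n-1$. Since $H\times\C^m$ is cut out in $\Pp^n\times\C^m$ by a single equation and $\Gamma$ is irreducible and not contained in it, Krull's principal ideal theorem gives that $\Gamma_\infty$ is pure of dimension $n-1$. Because $F$ is generically finite, $\overline{F(\C^n)}$ has dimension $n$ and $p:\Gamma\to\overline{F(\C^n)}$ is a proper dominant generically finite morphism. One then rules out that $p$ collapses a component of $\Gamma_\infty$: if $Z$ is a component of $\Gamma_\infty$ with $\dim p(Z)<n-1$, then the generic fibre of $p|_Z$ is positive dimensional, producing over a generic point $a\in p(Z)$ a positive-dimensional family of directions at infinity of arcs landing at $a$; feeding such a family into the construction of Step 2 sweeps out a subset of $J_F$ of dimension $n-1$ through $a$, which forces the component of $J_F$ through $a$ to have dimension $n-1$ and contradicts $\dim p(Z)<n-1$. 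Thus $p$ maps each component of $\Gamma_\infty$ onto an $(n-1)$-dimensional subset of $J_F$, and since $\Gamma_\infty$ is pure $(n-1)$-dimensional, $J_F=p(\Gamma_\infty)$ is pure $(n-1)$-dimensional.

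\emph{Main obstacle.} Step 1 and the bound $\dim J_F\le n-1$ are formal. The substantive points are, in Step 2, upgrading a meromorphic arc running to infinity into an honest polynomial parametrization of a curve lying in $J_F$ --- here the polynomiality of $F$ is essential, since it makes ``having a limit along an arc at infinity'' a Zariski-closed condition defined by finitely many equations; and, in Step 3, excluding the dimension drop under the proper projection $p$ --- here generic finiteness is essential, for without it $J_F$ need not be pure dimensional. These two steps, and especially their interaction, are where the real work lies.
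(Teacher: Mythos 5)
First, note that the paper does not prove this statement at all: it is quoted as Jelonek's theorem (the purity part goes back to \cite{j1}, the uniruledness to \cite{j2}), so there is no internal proof to compare with; what can be assessed is whether your argument would stand on its own, and it does not. Your Step 1 (the identification $J_F=p(\Gamma_\infty)$ for the graph closure in $\Pp^n\times\C^m$ and the bound $\dim J_F\le n-1$) is correct and is exactly the elementary observation the paper itself makes in its introduction. The two substantive steps, however, are asserted rather than proven. In Step 2 the whole content of uniruledness is hidden in the sentence ``one then exhibits, through the point of $\mathcal{A}$ corresponding to our $\gamma$, a non-constant polynomial curve $\C\to\mathcal{A}$ along which this limit map is non-constant.'' Nothing in your setup rules out that the limit map is constant on a neighborhood of that point of $\mathcal{A}$ (a priori $J_F$ could even be a finite set, which is precisely what is being disproved), so this existence claim is essentially equivalent to the theorem and cannot simply be ``exhibited''; producing such a curve is the actual work in Jelonek's papers.

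Step 3 has a genuine logical flaw in addition to relying on the unproven Step 2. If a component $Z$ of $\Gamma_\infty$ has $\dim p(Z)<n-1$, the positive-dimensional fibres of $p|_Z$ give many points of $\Gamma_\infty$ (directions at infinity) over the same value $a$; all the arcs they produce have the \emph{same} limit $a$, so ``feeding them into Step 2'' yields at best a curve in $J_F$ for each direction, hence (even granting Step 2, and granting that these curves do not coincide) a subset of dimension at most $2$, not $n-1$. Moreover the conclusion you draw is not a contradiction: the component of $J_F$ through $a$ being $(n-1)$-dimensional is perfectly compatible with $\dim p(Z)<n-1$, since that component may be swept out by a \emph{different} component of $\Gamma_\infty$; collapsing of one component of $\Gamma_\infty$ does not by itself violate purity of the union $J_F=p(\Gamma_\infty)$, so ``no collapse'' is both unproved and not the right statement to aim at. The standard route to purity (Jelonek \cite{j1}) is different: since $F$ is generically finite, each coordinate function $x_i$ satisfies an irreducible relation $P_i(y,x_i)=\sum_{j\le d_i} a_{ij}(y)\,x_i^{j}=0$ on the graph, with $a_{id_i}$ not identically zero on $\overline{F(\C^n)}$, and one shows $J_F=\overline{F(\C^n)}\cap\bigcup_i\{a_{id_i}=0\}$; this exhibits $J_F$ as a union of hypersurface sections of the irreducible $n$-dimensional variety $\overline{F(\C^n)}$, which gives pure dimension $n-1$ directly. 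As it stands, your proposal correctly frames the problem but leaves both of its essential steps---the construction of a non-constant rational curve through a given point of $J_F$, and purity of dimension---without proof.
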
Here by $\C$ -
unirulled variety we mean that through any point of this variety passes a
rational curve included in this variety ($X$ is $\C$-unirulled if for all $x\in X$ there exists a non-constant polynomial map $\varphi_x:\C\to X$ such that $\varphi_x(0)=x$).

In the real case the Jelonek set is an $\R$-unirulled
semi-algebraic set but there is no restriction on its dimension
just it can be any number between 1 and $n-1$ \cite{j3}.

\subsection{Intersection homology.}
We briefly recall the definition of intersection homology, for details
we refer  to the fundamental work of Goresky and MacPherson
\cite{gm1}.
\begin{dfn}\label{del pseudomanifolds}A {\bf pseudomanifold} is a subset of $\R^n$ whose singular locus
is of codimension at least 2.
\end{dfn}

All the pseudomanifolds that we will consider in
this paper will be semi-algebraic.

Let $X$ be a pseudomanifold, consider a filtration of subsets
$\emptyset=X_{-1}\subset X_0\subset\dots\subset X_m=X$ such that
$X_i\setminus X_{i-1}=\emptyset$ or $\dim(X_i\setminus
X_{i-1})=i$. We will say that $X$ is a {\bf stratified
pseudomanifold} if the stratification given by the sets $X_i\setminus
X_{i-1}$, for $0\leq i\leq m$ is locally topologically trivial
along the strata.


\begin{dfn}
A perversity is a $(m-1)$-tuple  of integers $\bar p = (p_2,
p_3,\dots , p_m)$ such that $p_2 = 0$ and $p_{k+1}\in\{p_k, p_k + 1\}$.

Traditionally we denote the zero perversity by
$\overline{0}=(0;\dots;0)$, the maximal perversity by
$\overline{t}=(0;1;\dots;m-2)$ and the middle perversities by
$\overline{m}=(0;0;1;1;\dots [\frac{m-2}{2}])$ (lower middle) and
$\overline{n}=(0;1;1;2;2;\dots [\frac{m-1}{2}])$ (upper middle). We say that the
perversities $\pb$ and $\qb$ are {\bf complementary} if $\pb+\qb=\tb$.

Let $X$ be a stratified pseudomanifold and  let $\sigma_j:T_i\to X$ be a singular simplex. We say that the support of a
singular chain $c=\sum\alpha_j\sigma_j$ of dimension $i$ is {\bf $(\bar
p, i)$-allowable} if  $\dim (|c| \cap X_{m-k}) \leq i - k + p_k$ for
all $k \geq 2$.

In particular, a subspace $Y \subset X$ is called   $(\overline{t};i)$-allowable if $\dim( Y \cap X_{sing}) <i-1$.

Define $IC_i ^{\overline{p}}(X)$ to be the subgroup of $C_i(X)$
consisting of those chains $\xi$ such that $|\xi|$ is
$(\overline{p}, i)$-allowable and $|\partial \xi|$ is
$(\overline{p}, i - 1)$-allowable.
\end{dfn}

\begin{dfn} The {\it $i^{th}$ intersection homology group of perversity $\overline{p}$}, denoted by
$IH_i ^{\overline{p}}(X)$, is the $i^{th}$ homology group of the
chain complex $IC^{\overline{p}}_\bullet(X).$\end{dfn}

 Goresky and MacPherson proved that these groups are independent of the
choice of the stratification and are finitely generated \cite{gm1, gm2}.

Recall also a remarkable \begin{thm}[Goresky,
MacPherson \cite{gm1}] Let $X$ be an orientable, compact
pseudomanifold then generalized Poincar\'e duality applies:
 \begin{equation}\label{eq poincare duality} IH_k ^{\pb}(X)  \simeq IH_{m-k} ^{\qb} (X), \end{equation}
where $\pb$ and $\qb$ are complementary perversities.\end{thm}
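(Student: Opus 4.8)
The plan is to construct an explicit bilinear pairing
\[
IH_k^{\pb}(X)\times IH_{m-k}^{\qb}(X)\longrightarrow\R
\]
by geometric intersection of cycles and to prove it nondegenerate; since the coefficients form the field $\R$, nondegeneracy is exactly the asserted isomorphism (using the finite generation recalled above, and dualizing once more if one wants the symmetric form displayed). First I would fix a semi-algebraic triangulation of $X$ adapted to the filtration --- harmlessly refined so as to carry no codimension-one stratum --- so that every intersection class has a simplicial representative and two given cycles can be pushed into \emph{stratified general position} by subdivision and small simplicial perturbations, i.e. so that their supports meet each stratum in the expected dimension. What makes the construction run is complementarity: if $|\xi|$ is $(\pb,k)$-allowable and $|\eta|$ is $(\qb,m-k)$-allowable, then on a codimension-$c$ stratum $S=X_{m-c}\setminus X_{m-c-1}$, which is an $(m-c)$-manifold, general position gives
\[
\dim\bigl(|\xi|\cap|\eta|\cap S\bigr)\le(k-c+p_c)+\bigl((m-k)-c+q_c\bigr)-(m-c)=p_c+q_c-c=t_c-c=-2
\]
for every $c\ge2$. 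Hence $|\xi|\cap|\eta|$ avoids $Sing(X)$ entirely and is a compact $0$-dimensional submanifold of $Reg(X)$; summing its local intersection indices --- it is here that orientability of $X$ is used, to fix the signs coherently --- yields a number $\xi\cdot\eta$.

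Next I would check that $\xi\cdot\eta$ depends only on the intersection homology classes. If $\xi=\partial a$ with $|a|$ and $|\partial a|$ allowable and $\eta$ is a $\qb$-cycle, move $a$ into general position with $\eta$; then $a\cap\eta$ is a $1$-chain and, since $\partial\eta=0$, one has $\partial(a\cap\eta)=(\partial a)\cap\eta=\xi\cap\eta$, so the $0$-cycle $\xi\cap\eta$ bounds and its total index is $0$. The symmetric argument handles the second variable, so the pairing descends to $IH_k^{\pb}(X)\times IH_{m-k}^{\qb}(X)$.

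The real work is nondegeneracy, and the plan is to obtain it by a local-to-global induction on the depth of the stratification. For $x$ in a stratum of codimension $c$ with link $L$ --- a compact stratified pseudomanifold of dimension $c-1$, hence of strictly smaller depth --- a distinguished neighborhood of $x$ has the form $\R^{m-c}\times c^{\circ}L$, and multiplication by $\R^{m-c}$ does not change intersection homology, so $IH_\bullet^{\pb}$ of the neighborhood coincides with $IH_\bullet^{\pb}(c^{\circ}L)$, which the cone formula identifies with a truncation of $IH_\bullet^{\pb}(L)$; likewise the Borel--Moore cone formula identifies the locally finite intersection homology $IH^{\mathrm{BM}}_\bullet$ of the neighborhood, in perversity $\qb$, with the complementary truncation of $IH_\bullet^{\qb}(L)$, and the two ranges match up precisely because $p_c+q_c=c-2$, so the inductive duality isomorphism for $L$ promotes to a Poincar\'e--Lefschetz duality over the neighborhood, compatible with the intersection pairing. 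A Mayer--Vietoris sequence for intersection homology together with the five lemma then glues these local isomorphisms over a finite cover of the compact $X$ by distinguished neighborhoods; compactness makes the induction terminate and keeps everything, globally, in ordinary rather than Borel--Moore homology.

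The step I expect to be the main obstacle is exactly this inductive gluing: making stratified general position rigorous in the semi-algebraic (or PL) category, proving the cone formula and its Borel--Moore counterpart, and arranging the truncations so that the five lemma applies stratum by stratum --- the bookkeeping with perversities and with ordinary versus Borel--Moore homology is where all the content of the theorem lies. An alternative that avoids chain-level transversality is the sheaf-theoretic route: present $IH_\bullet^{\pb}(X)$ as the hypercohomology of Deligne's intersection chain complex $\mathbf{IC}^{\pb}$, characterize that object by the Goresky--MacPherson axioms, and observe that its Verdier dual satisfies the axioms pinning down $\mathbf{IC}^{\qb}$ --- the support conditions for $\pb$ turning into the cosupport conditions for $\qb$, once again because $\pb+\qb=\tb$, with a degree shift and the orientation sheaf intervening --- whereupon global Verdier duality on the compact orientable $X$ gives \eqref{eq poincare duality} at once. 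We shall need only the statement in what follows, and refer to \cite{gm1,gm2} for the complete proof.
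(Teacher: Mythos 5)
The paper does not prove this statement at all: it is quoted verbatim as a theorem of Goresky and MacPherson, with \cite{gm1} (and \cite{gm2,k} for the non-compact Borel--Moore variant) serving as the proof, so there is no internal argument to compare yours against. Your outline is consistent with how the result is actually established in the literature: the chain-level intersection pairing with the dimension count $p_c+q_c-c=t_c-c=-2$ forcing $|\xi|\cap|\eta|$ off the singular set is exactly the mechanism of the original PL proof, and your alternative via Deligne's complex and Verdier duality is the argument of the second Goresky--MacPherson paper; the well-definedness computation $\partial(a\cap\eta)=(\partial a)\cap\eta$ also checks out (the same count with $k+1$ in place of $k$ gives dimension $\le -1$ on singular strata). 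Be aware, though, that the two steps you defer are not mere bookkeeping: stratified general position cannot be obtained by naive small simplicial perturbations, since the homotopy must stay inside $X$ and preserve allowability --- Goresky and MacPherson work in the PL category and invoke a genuine stratified general position theorem --- and the local-to-global step (cone formula, its Borel--Moore counterpart, the matching of truncations at $p_c+q_c=c-2$, and the Mayer--Vietoris/five-lemma induction over distinguished neighborhoods) is essentially the whole content of the theorem, carried out carefully in later treatments and, sheaf-theoretically, in \cite{gm2}. Also note that nondegeneracy of the pairing, hence the displayed isomorphism, is a statement over field coefficients (here $\R$), which is the setting the paper uses; the integral statement is weaker. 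Given that the paper itself treats the theorem as a black box, citing \cite{gm1,gm2} as you do in your last sentence is the appropriate resolution, but your write-up should be presented as a sketch of the cited proof rather than a self-contained one.
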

In the non-compact case the above isomorphism holds
for Borel-Moore homology:
\begin{equation}\label{eq poincare duality} IH_k ^{\pb}(X)  \simeq IH_{m-k,BM} ^{\qb} (X), \end{equation}
where $IH_{\bullet,BM}$ denotes the intersection holomology with
respect to Borel-Moore chains \cite{gm2,k}.

\subsection{$L^\infty$ cohomology}
 Let $M\subset \R^n$ be a smooth submanifold.

\medskip

\begin{dfn}  We say that a form  $\omega$ on $M$
is $L^\infty$ if there exists a constant $C$ such that  for any $x
\in M$: $$|\omega(x)| \leq C. $$
 We denote by $\Omega^j_\infty
(M)$ the cochain complex constituted by all the $j$-forms
$\omega$ such that $\omega $ and $d\omega$ are both $L^\infty$.

The cohomology groups of this cochain complex are called the {\bf
$L^\infty$-cohomology groups of $M$} and will be denoted by
$H^\bullet _\infty(M)$. We may endow this cochain complex with the
norm:$$|\omega|_\infty:=\sup_{M} |\omega|+\sup_M |d\omega|. $$
\end{dfn}

Recently the second author showed that   the $L^\infty$
cohomology of a pseudomanifold coincides with its intersection
cohomology in the maximal perversity (\cite{v}, Theorem 1.2.2):
\begin{thm}\label{thm_intro_linfty} Let $X$ be a compact subanalytic pseudomanifold (possibly with boundary). Then, for any $j$:
$$H_\infty ^j(Reg(X)) \simeq I^{\bar t}H^j (X).$$
Furthermore, the isomorphism is induced by the natural map
provided by integration on allowable simplices.
\end{thm}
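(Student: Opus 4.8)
The plan is to establish a de Rham type isomorphism, in the spirit of the $L^2$--intersection cohomology dictionary, by comparing the complex of bounded forms with the Deligne intersection cohomology complex through its axiomatic characterization. First I would construct the natural morphism given by integration. On $Reg(X)$, which with the metric induced from $\R^n$ is a smooth Riemannian submanifold, integration of a smooth $j$-form over a smooth singular $j$-simplex is classical; the subtlety is that a $(\bar t,j)$-allowable simplex $\sigma$ may meet $Sing(X)$, but only in codimension $\geq 2$ inside $\sigma$. Since $\omega$ is bounded and $\sigma$ has finite $j$-dimensional volume, $\int_\sigma \omega$ converges absolutely, so $\omega\mapsto(c\mapsto\int_c\omega)$ defines a map $\Omega^\bullet_\infty(Reg(X))\to I^{\bar t}C^\bullet(X)=\mathrm{Hom}(IC^{\bar t}_\bullet(X),\R)$ into the intersection cochains. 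The first genuine point is a Stokes formula for bounded forms over allowable chains: one must show that the contribution to $\int_{\partial c}\omega-\int_c d\omega$ coming from a shrinking neighbourhood of $|c|\cap Sing(X)$ tends to $0$. This uses that $|c|\cap Sing(X)$ has codimension at least $2$ in $|c|$, so the "error collar" has volume $\to 0$, together with the boundedness of $d\omega$, so the integrand stays under control. Hence integration is a morphism of complexes, inducing on cohomology the map asserted in the statement.

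Next I would sheafify: both $U\mapsto\Omega^\bullet_\infty(Reg(X)\cap U)$ and $U\mapsto I^{\bar t}C^\bullet(U)$ give complexes of sheaves on $X$ (soft ones: partitions of unity on the top stratum, and a \v{C}ech-type argument along the singular set, or the standard softness of intersection cochains), and integration induces a morphism between them. The target is, up to quasi-isomorphism, the Deligne complex for the perversity $\bar t$, which is characterized by the Goresky--MacPherson support and cosupport (attachment) axioms. It therefore suffices to verify that the $L^\infty$ complex satisfies those same axioms: normalization on the open dense stratum holds because there the morphism is the ordinary smooth de Rham map, and the support and attachment conditions reduce to a stalk computation at a point $x$ in a stratum of positive codimension. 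Equivalently, one can bypass sheaves and argue by induction over the skeleta of a subanalytic stratification, using Mayer--Vietoris for both theories and the same local model as the inductive step.

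For the stalk computation I would invoke the local Lipschitz conic structure of subanalytic sets: a neighbourhood of $x$ in $X$ is homeomorphic, subanalytically and bi-Lipschitzly for the inner metric, to an open cone $c^\circ L$ over the link $L=L_x$, with the metric near the vertex comparable to $dr^2+r^2 g_L$. Since $L^\infty$ cohomology of the regular locus is a bi-Lipschitz invariant (see \cite{v}), this reduces the computation to $H^\bullet_\infty$ of this model cone, where one builds an explicit homotopy operator by integrating forms along the radial direction and coning off at the vertex, and checks that it carries $L^\infty$ forms with $L^\infty$ differential to forms of the same type up to the vertex. The outcome is the cone formula $H^j_\infty(Reg(c^\circ L))\simeq H^j_\infty(Reg(L))$ in the range prescribed by $\bar t$ and $0$ outside it, which is exactly the cone formula satisfied by $I^{\bar t}H^j$. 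Feeding this into the hypercohomology spectral sequence, or into the inductive Mayer--Vietoris, shows that integration is a quasi-isomorphism of sheaf complexes, whence $H^j_\infty(Reg(X))\simeq I^{\bar t}H^j(X)$; compactness of $X$ identifies the global hypercohomology with the groups in the statement and gives finite-dimensionality, and the case with boundary follows by a collaring argument.

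The step I expect to be the main obstacle is the cone computation, and specifically the construction of the radial homotopy operator with \emph{uniform} $L^\infty$ bounds up to the vertex: it is here that the maximal perversity is forced, since only for $\bar t$ do the estimates close, and the argument must balance the boundedness of $\omega$, the boundedness of $d\omega$, and the $r$-degeneration of the conical metric simultaneously. A secondary difficulty is the reduction to the model cone itself, namely passing from the genuine inner metric on a subanalytic germ — which is only bi-Lipschitz equivalent to, not equal to, a metric cone — to the model; this rests on the bi-Lipschitz invariance of $L^\infty$ cohomology and on the Lipschitz conic structure theorem.
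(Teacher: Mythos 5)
A preliminary remark: the paper you were given does not prove this statement at all; it is imported verbatim from \cite{v} (Theorem 1.2.2 there), with only a one-line remark that the boundary case still applies. So the only meaningful comparison is with the proof in \cite{v}. Your outline does share its broad architecture — an integration map on allowable chains, localization, and a local Poincar\'e-lemma-type computation in which the maximal perversity is forced by the $L^\infty$ estimates — but as a proof it has a genuine gap at exactly the point you identify as the heart of the argument.

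The gap is the local model. It is false in general that a subanalytic germ, with its inner metric, is bi-Lipschitz homeomorphic to a metric cone with metric comparable to $dr^2+r^2g_L$: the $\beta$-horn $\{x^2+y^2=z^{2\beta},\ z\ge 0\}$ with rational $\beta>1$ is a semialgebraic pseudomanifold germ (singular only at the vertex) which is not inner bi-Lipschitz equivalent to the straight cone over the circle — the horn exponent is an inner bi-Lipschitz invariant. What is actually available, and what \cite{v} uses, is the much weaker ``Lipschitz conic structure'': a subanalytic homeomorphism from the cone over the link onto the germ, extending the identity on the link, which is Lipschitz with uniform constants in one direction only, giving a Lipschitz contraction onto the point. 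Because bounded forms pull back to bounded forms under Lipschitz maps, a radial homotopy operator can still be built, but the estimates must be run with this weaker structure; your step ``balance the $r$-degeneration of the conical metric'' has no meaning for the horn. Two further steps also need repair rather than assertion: (i) $U\mapsto\Omega^\bullet_\infty(Reg(X)\cap U)$ is a presheaf but not a sheaf (boundedness is not local), so the Deligne axiomatic characterization and the softness claim cannot be invoked directly — one must pass to locally bounded forms and use compactness, or replace the sheaf argument by a finite Mayer--Vietoris; (ii) your Stokes/convergence argument is carried out for arbitrary singular allowable simplices, whose geometry near $Sing(X)$ is not controlled by allowability alone (``codimension $2$ in the support'' gives no volume bound for a collar of a merely continuous simplex); the natural map is defined on subanalytic allowable chains, and one needs the comparison between subanalytic and singular intersection homology to conclude.
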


The theorem presented in the latter paper was devoted to pseudomanifolds without boundary but actually still aplies when the boundary is nonempty (as mentioned in the introduction of \cite{v}).

%

\section{Construction of an embedding}
For now we will consider a polynomial map $F:\C^n\to\C^n$ as a
real one $F:\R^{2n}\to\R^{2n}$. By $Sing(F)$ we mean a singular
locus of $F$ which is the zero set of its Jacobian and we denote by
$K_0(F)$ the set of critical values of $F$ i. e. the set  $F(Sing(F))$.

  We denote by $\rho$
the Euclidean Riemannian metric of $\R^{2n}$. We can pull it back 
in a natural way:
$$F^\ast\rho_x(u,v):=\rho(d_xF(u),d_xF(v)).$$ This metric
 is non degenerate outside the singular locus of $F$.

Define the Riemannian manifold $M_F:=(\C^n\setminus
Sing(F);F^\ast\rho)$, and observe that  the map
$F$ induces a local isometry nearby any point of $M_F$.

The first important step is to embed the manifold $M_F$ into an affine
space. For this we can make use of $F$.  The only problem is that
$F$ is not necessarily one-to-one, but just locally one-to-one. We
begin by  proving the following lemma, which is a well
known fact that we detail for the convenience of the reader.
\begin{lem}\label{lem F local homeo} There exists a finite  covering of  $M_F$ by open
semi-algebraic subsets, such that on any element of this covering
the mapping $F$ induces a diffeomorphism.
\end{lem}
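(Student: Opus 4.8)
The plan is to exploit the fact that $F$ is locally a diffeomorphism on $M_F$ together with semi-algebraicity and a compactness/finiteness argument. First I would observe that since $F:\C^n\to\C^n$ is polynomial and $Sing(F)$ is its critical locus (a semi-algebraic, indeed algebraic, subset), the manifold $M_F=\C^n\setminus Sing(F)$ is a smooth semi-algebraic set on which $d_xF$ is invertible at every point; by the inverse function theorem, every point $x\in M_F$ has an open neighbourhood $U_x$ on which $F$ restricts to a diffeomorphism onto its image. The issue is to extract a \emph{finite} subfamily that is moreover made of \emph{semi-algebraic} sets.

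For the semi-algebraicity, I would invoke the definable (semi-algebraic) version of the inverse function theorem, or equivalently cell decomposition: one can cover $M_F$ by semi-algebraic open sets on each of which $F$ is injective. Concretely, consider the semi-algebraic set $\{(x,y)\in M_F\times M_F : x\neq y,\ F(x)=F(y)\}$; its image under the first projection is a semi-algebraic set, and away from it $F$ is globally injective, while near the diagonal one uses local injectivity. A clean way is to use the fact that the set of pairs $(x,x')$ with $F(x)=F(x')$ and $x\neq x'$ is closed in $M_F\times M_F$ minus the diagonal and semi-algebraic, so that for each $x$ one gets a semi-algebraic radius $r(x)>0$ with $F$ injective on the ball $B(x,r(x))\cap M_F$; shrinking further, $F$ is a diffeomorphism onto its image there. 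Then I would apply a semi-algebraic finiteness result — for instance, that a semi-algebraic family of open sets covering a semi-algebraic set admits a finite subcover after a semi-algebraic triangulation (or directly: decompose $M_F$ into finitely many semi-algebraic cells, each contained, after suitable choice, in one of the injectivity neighbourhoods) — to reduce to a finite covering.

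The step I expect to be the main obstacle is precisely this passage from the local (pointwise) statement to a \emph{finite semi-algebraic} covering: ordinary local triviality gives only an open cover, and compactness is unavailable since $M_F$ is not compact, so one genuinely needs the tameness of the semi-algebraic category. I would handle it by a cell-decomposition argument adapted to the finite stratification of $M_F$ by the connected components of the fibres of $F$, choosing on each cell a semi-algebraic section-type neighbourhood; the uniform bound on the number of connected components of fibres of a polynomial map (again a semi-algebraic finiteness fact) is what ultimately forces finiteness. Once a finite semi-algebraic open cover $\{V_1,\dots,V_N\}$ with $F|_{V_i}$ a diffeomorphism onto its image is produced, the lemma is proved.
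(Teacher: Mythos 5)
There is a genuine gap, and it sits exactly where you predicted the main obstacle would be: the passage from local injectivity to a \emph{finite} semi-algebraic cover. The finiteness principle you invoke --- ``a semi-algebraic family of open sets covering a semi-algebraic set admits a finite subcover (or a finite refinement subordinate to it)'' --- is false. Take the family $U_t=(t-1,t+1)$, $t\in\R$, covering $\R$: no finite subfamily covers $\R$, and no finite collection of semi-algebraic open sets, each contained in some $U_t$, can cover $\R$ either, since each such set is bounded. The same obstruction kills your concrete plan: your injectivity neighbourhoods are balls $B(x,r(x))\cap M_F$, hence bounded, while $M_F$ is unbounded, so no finite subfamily (and no finite cell decomposition of $M_F$ with each cell inside one such ball) can exist. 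Moreover, even if one could force $F$ to be injective on each cell of a finite decomposition of $M_F$, cells of positive codimension are not open, so one would still have to thicken them while preserving injectivity --- a step your proposal does not address.

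The finiteness has to come from a different mechanism, namely from the structure of $F|_{M_F}$ as a finite-to-one semi-algebraic map, and this forces the injectivity sets to be large (typically unbounded) sets, not balls. This is what the paper does: decompose the \emph{target} into finitely many semi-algebraic cells $C_i$ so that over each open cell the graph $\Gamma_F$ of $F|_{M_F}$ is the union of the graphs of finitely many semi-algebraic sections $\xi_i^1,\dots,\xi_i^{s_i}:C_i\to\R^{2n}$; extend each section continuously to the closure of its cell (possible because, by finiteness of the fibres and connectedness of $U\cap C_i$ for small neighbourhoods $U$ of boundary points, each $\xi_i^j$ has a single asymptotic value in $\R^{2n}\cup\{\infty\}$); and then use the fact that $\pi_2$ is a local isomorphism on $\Gamma_F$ away from $Sing(F)$ (i.e.\ the inverse function theorem) to thicken each closed branch $D_i$ to an open semi-algebraic neighbourhood of $\pi_1(D_i)$ in $M_F$ on which $F$ is a diffeomorphism onto its image. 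Your closing remarks about ``section-type neighbourhoods'' over cells and the uniform bound on the number of points in the fibres do point in this direction, but they are not carried out, and the mechanism you actually propose (finite subcover of the family of injectivity balls, cells of $M_F$ contained in such balls) cannot be repaired as stated.
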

\begin{proof}
Let $\Gamma_F \subset \R^{2n} \times \R^{2n}$ be the graph  of the
restriction of $F$ to $M_F$.  As the restriction of $F$ to $M_F$
is finite-to-one, there exists a partition $(C_i)_{i \in I}$ of
$\R^{2n}$, into smooth semi-algebraic cells, such that the set
$\Gamma_F$ is, over each open $C_i$, the union of the respective graphs
of finitely many semi-algebraic smooth mappings
$\xi_i^1,\dots,\xi_i ^{s_i}:C_{i} \to \R^{2n}$. Take an open cell
$C_i$. Every point of its boundary possesses a basis of neighborhoods $U$ in
$\R^{2n}$ such that $U \cap C_i$ is connected. At any point of the
boundary of $C_i$ which does not belong to $K_0(F)$, the function $\xi_i ^j$, $1 \leq j \leq s_j$ may
have only one asymptotical value in $\R^{2n} \cup \{\infty\}$
(again due to the finiteness of the fibers of $F$ and
connectedness of $C_i\cap U$). This means that the mapping $\xi_i$
extends continuously to the boundary of $C_i$  to a mapping with
values in $\R^{2n} \cup \{\infty\}$. Let $D_i$ be the graph of
$\xi_i$ restricted to the set  $\{x \in {\overline (C_i)}\setminus  K_0(F)\,\,|\,\, \xi_i(x) <\infty\}$. This set is clearly closed in $\Gamma_F $.

Let $\pi_1:\C^{2n} \to \C^n$ (resp. $\pi_2$) be the projection on
the first (resp. second) $n$ coordinates. As $F$ is nonsingular on
$M_F$, for any $x \notin Sing(F)$ there exists a neighborhood $U$
of $x$ such that $F_{|U}$ is an analytic isomorphism onto its
image. This means that $\pi_2$ induces a local analytic
isomorphism in a neighborhood of $(x;F(x))$ in $\Gamma_F$ for $x
\notin Sing(F)$. Hence, for any $z $
  in  $\pi_2(D_i)$, the mapping $\xi_i^j$ may be extended to a smooth
mapping in a neighborhood of $\pi_2(z)$ and therefore $F$ induces
a diffeomorphism from a neighborhood of $\pi_1 (D_i)$ in $M_F $
onto its image.
\end{proof}

The next proposition will enable us to transfer the geometry at infinity of
given  polynomaial map $F:\C^2\to\C^2$ to the constructed set. Namely the
intersection homology of the set $N_F$ provided by following
proposition determines the geometry of $F$ at infinity as we will
see in the main theorem.
\begin{pro}
 Let $F:\C^n\to\C^n$ be a polynomial map.
There exists a real semi-algebraic pseudomanifold $N_F \subset
\R^{\nu}$, for some $\nu \geq n$, such that \begin{equation}Sing(N_F)\subset (J_F\cup
K_0(F))\times\{0\}\end{equation}
 and there exists a semi-algebraic bilipschitz map:
$$h_F:M_F \to Reg(N_F),$$
where $N_F$ is equipped with the induced metric on $\R^{\nu}$.

\end{pro}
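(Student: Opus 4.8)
The plan is to build $N_F$ as a semi-algebraic completion of the graph of $F$ restricted to $M_F$, using the local diffeomorphism structure from Lemma~\ref{lem F local homeo}, and then to check it is a pseudomanifold and that the natural inclusion is bilipschitz onto its regular part. Concretely, let $\Gamma_F\subset\R^{2n}\times\R^{2n}$ be the graph of $F_{|M_F}$, with $\pi_1,\pi_2$ the two projections. By Lemma~\ref{lem F local homeo} there is a finite semi-algebraic open cover $(V_\alpha)$ of $M_F$ on each of which $F$ is a diffeomorphism onto its image; the key point is that on $\pi_1(V_\alpha)$ the set $\Gamma_F$ is the graph of a smooth semi-algebraic map, so on $\Gamma_F$ the first projection $\pi_1$ is a semi-algebraic diffeomorphism onto $M_F$ with uniformly controlled differential on each piece, and hence the inclusion $M_F\cong\Gamma_F\hookrightarrow\R^{4n}$ is locally bilipschitz. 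Taking the euclidean closure $N_F:=\overline{\Gamma_F}$ in $\R^{4n}$ (so $\nu=4n$, or one can project to a smaller $\R^\nu$), the added points of $\overline{\Gamma_F}\setminus\Gamma_F$ project under $\pi_2$ into $J_F$ (points coming from $|x_k|\to\infty$) or under $\pi_1$ into $Sing(F)$, whose image under $F$ lands in $K_0(F)$; this gives the inclusion $Sing(N_F)\subset(J_F\cup K_0(F))\times\{0\}$ after identifying the copy of $\C^m$ in the target-coordinates with the one carrying these sets.

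Next I would verify the dimension count that makes $N_F$ a genuine pseudomanifold. The graph $\Gamma_F$ has real dimension $2n$, so $N_F$ is $2n$-dimensional; one must show $Sing(N_F)$ has real codimension at least $2$, i.e. real dimension at most $2n-2$. Here the complex structure helps: $J_F$ is a complex variety of complex dimension $n-1$ by Theorem~\ref{jelonek}, hence real dimension $2n-2$, and $K_0(F)$, being the image of the complex hypersurface $Sing(F)$, also has complex dimension at most $n-1$, so real dimension at most $2n-2$. Since $Sing(N_F)$ is contained in (a set fibering over) $J_F\cup K_0(F)$, one gets the codimension-$2$ bound — modulo checking that the fibers added over these points do not bump up the dimension, which follows because over a generic point of $J_F$ or $K_0(F)$ only finitely many branches of $\Gamma_F$ accumulate. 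I would also record here that, away from $Sing(N_F)$, the set $N_F$ is a smooth manifold bi-Lipschitz to $M_F$: this is exactly where we need that $\pi_1$ restricted to $\Gamma_F$ is a semi-algebraic homeomorphism with locally bounded and locally bounded-inverse derivative, and boundedness of the Jacobian of $F$ is \emph{not} assumed, so the bilipschitz constant is only local — but that is all that is claimed, since the statement only asks for $h_F$ to be semi-algebraic and bilipschitz, which for semi-algebraic maps is automatically a local-to-global property via finiteness of the relevant stratifications.

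The map $h_F$ is then simply $x\mapsto(x,F(x))$, viewed as a map $M_F\to\Gamma_F=Reg(N_F)$ (one must first argue $Reg(N_F)=\Gamma_F$, i.e. that no point of $\Gamma_F$ becomes singular in the closure and no added point is a smooth point of $N_F$ of the right dimension; the former is clear from the graph description, the latter from the accumulation of several branches or of a branch going to infinity). That $h_F$ is semi-algebraic is immediate since $F$ is polynomial; that it is a homeomorphism is the graph projection $\pi_1$ again; that it is bilipschitz for the metric $F^*\rho$ on the source and the euclidean metric on $N_F$ follows because on each cell $C_i$ the component $\xi_i^j$ of $\Gamma_F$ is smooth semi-algebraic, so its differential together with that of the identity component controls the metric comparison, and a standard curve-selection / \L ojasiewicz argument upgrades pointwise nondegeneracy to a bilipschitz estimate on each semi-algebraic piece.

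I expect the main obstacle to be the careful bookkeeping of the closure: showing that the limit set $\overline{\Gamma_F}\setminus\Gamma_F$ projects exactly where claimed and that its dimension does not exceed $2n-2$, and simultaneously that $\Gamma_F$ is all of $Reg(N_F)$. The subtlety is that $\overline{\Gamma_F}\setminus\Gamma_F$ consists of two genuinely different kinds of points — those arising from $x_k\to Sing(F)$ and those from $|x_k|\to\infty$ — and both must be controlled, the second using Theorem~\ref{jelonek} in an essential way. A secondary technical point is that one should really work with the closure inside a suitable partial compactification or directly in $\R^\nu$ after first discarding or resolving the directions along which $\xi_i^j\to\infty$ over finite points (handled in the proof of Lemma~\ref{lem F local homeo} by passing to the locus where $\xi_i<\infty$); getting a clean semi-algebraic pseudomanifold rather than something with extra low-dimensional debris is where most of the work lies, but it is all of \L ojasiewicz-inequality / cell-decomposition type and presents no conceptual difficulty once the stratification of $\Gamma_F$ from Lemma~\ref{lem F local homeo} is in hand.
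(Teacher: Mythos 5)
Your construction goes in a genuinely different direction from the paper, and unfortunately it does not deliver what the proposition asks for. Taking $N_F:=\overline{\Gamma_F}$ with $h_F(x)=(x,F(x))$ fails on the two essential points. First, the metric: $M_F$ carries the pullback metric $F^\ast\rho$, and the graph embedding adds the identity block to the $F$ block, so the induced metric on $\Gamma_F$ is comparable to $\rho+F^\ast\rho$; for this to be bilipschitz-equivalent to $F^\ast\rho$ you would need $|u|\leq C\,|d_xF(u)|$ uniformly on $M_F$, which is false in general (the inverse differential of $F$ typically blows up at infinity even when the Jacobian never vanishes). Your remark that bilipschitzness is ``automatically a local-to-global property'' for semi-algebraic maps is not true, and in any case even the local constants degenerate here. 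Second, and more fatally for the rest of the paper: if $|x_k|\to\infty$ while $F(x_k)\to y\in J_F$, the points $(x_k,F(x_k))$ escape to infinity in $\R^{4n}$, so the Euclidean closure of the graph adds \emph{nothing} over the Jelonek set. In fact $\overline{\Gamma_F}$ is just the full graph of $F$, a smooth $2n$-manifold, so $Reg(N_F)$ strictly contains $h_F(M_F)$ and the statement $Sing(N_F)\subset (J_F\cup K_0(F))\times\{0\}$ becomes vacuous --- there is no ``$\times\{0\}$'' slice in your picture at all, and the whole purpose of the construction (bringing the behaviour of $F$ at infinity to finite points of $N_F$, so that $J_F\times\{0\}$ can appear as singular locus and later carry nontrivial (intersection) homology classes) is lost.

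The paper's proof avoids exactly these problems by putting $F(x)$, not $x$, in the dominant block of coordinates: it takes the covering $U_1,\dots,U_p$ from Lemma \ref{lem F local homeo}, closed sets $V_i\subset U_i$ still covering $\R^{2n}$, and Nash functions $\psi_i$ (Mostowski's separation theorem) positive on $V_i$ and negative off $U_i$, then sets $h_F:=(F;\psi_1;\dots;\psi_p)$ and $N_F:=\overline{h_F(M_F)}$. The $\psi_i$ separate the sheets of $F$, which gives injectivity; after multiplying by a large power of $1/(1+|x|^2)$ (Łojasiewicz inequality) the compositions $\psi_i\circ(F_{|U})^{-1}$ have bounded derivatives, which gives the bilipschitz estimate with respect to $F^\ast\rho$ because $F$ itself is an isometry from $(U,F^\ast\rho)$ onto its image; and since the rescaled $\psi_i$ tend to $0$ at infinity and near $Sing(F)$, a sequence with $|x_k|\to\infty$ and $F(x_k)\to y$ gives $h_F(x_k)\to(y,0)$, so the frontier $N_F\setminus h_F(M_F)$ lands exactly in $(J_F\cup K_0(F))\times\{0\}$, which has codimension $2$ by Theorem \ref{jelonek}. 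If you want to salvage your approach you must replace the identity component of the graph by data that stays bounded (indeed tends to $0$) along sequences going to infinity with bounded image and that still separates the branches of $F$ --- which is precisely the role of the Mostowski--Łojasiewicz functions in the paper.
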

\begin{proof}
Let $U_1,\dots,U_p$ be the open sets provided by Lemma \ref{lem F
local homeo}. We may find some semi-algebraic closed subsets $V_i
\subset U_i$ which cover $\R^{2n}$ as well.

Thanks to Mostowski separation theorem \cite{m}, see also Lemma 8.8.8 in \cite{bcr}, there exist Nash functions $\psi_i:\R^{2n} \to \R$,
$i=1,\dots,p$, such that for each $i$,  $\psi_i$ is positive on
$V_i$ and negative on $\R^{2n}\setminus U_i$. Define:
\begin{equation}\label{h_F}h_F:=(F;\psi_1;\dots;\psi_p)\,\,\mbox{and}\,\, N_F:=\overline{h_F(M_F)}.\end{equation} 

We first check that the mapping $h_F$ is injective on $M_F$. Take $x \neq x'$
with $h_F(x)=h_F(x')$, and let $i$ be such that $x\in V_i$. As
$F(x')=F(x)$, the injectivity of $F$ on $U_i$ entails that $x'
\notin U_i$. But this means that $\psi_i(x)>0$ and $\psi_i(x')<0$.
A contradiction.

We  claim that if the functions $\psi_i$ are chosen
sufficiently small then the mapping $h_F$ is 
bilipschitz. For this we will use \L ojasiewicz inequality in the following form:
\begin{pro}\label{lojasiewicz}
Let $A\subset\R^n$ be a closed semi-algebraic set, and $f:A\to\R$ a continuous semi-algebraic function. Then there exist 
$c\in\R$ and $p\in\N$ such that for any $x\in A$ one have
$$|f(x)|\le c(1+|x|^2)^p.$$
\end{pro}
For the proof see \cite{bcr}, Proposition 2.6.2.

 Now take a point $z$ in $N_F$ and denote by
$\pi_1:\R^{2n+p} \to \R^{2n}$ the projection on the first $2n$
coordinates. Then $y=\pi_1(z)$ belongs to the image of $F$. Choose
$x \in \R^{2n}$ with $F(x)=y$ and fix a neighborhood of $x$ such
that the mapping $F_{|U}:U \to \R^{2n} $ is a diffeomorphism onto
its image.  Define, for $y \in F(U)$ following functions:
\begin{equation}
\tilde{\psi}_i(y):=\psi_i \circ (F_{|U})^{-1}(y)
\end{equation}
and 
\begin{equation}
\hat{\psi}(y):=(y;\tilde{\psi_1}(y);\dots;\tilde{\psi_p}(y)),
\end{equation}
for $i=1,\dots ,p$.

We have then formula
\begin{equation}h_F(x)=(F(x);\tilde{\psi}_1(F(x)));\dots;\tilde{\psi}_p(F(x)))=\hat{\psi} (F(x)).\end{equation}

As the map $F: (U;\rho_F) \to F(U)$ is bilipschitz, it is enough to
show that $\hat{\psi}:F(U) \to \pi_1^{-1} (F(U))$ is 
bilipschitz. This amounts to prove that $\tilde{\psi}_i$ has
bounded derivatives for any $i=1,\dots,p$. By \L ojasiewicz
inequality (Proposition \ref{lojasiewicz}) there exist  a positive constant $\varepsilon$ and an
integer $N$ such that $$ \underset{u \in S^n}{\inf} |d_x F(u) | \geq
\varepsilon \frac{1}{(1+|x|)^N},$$ for any $x \in \R^{2n}$.
Furthermore the derivatives of $\psi_i$ satisfy a similar
inequality as well. Therefore multiplying $\psi_i$ by a huge power
of $\frac{1}{1+|x|^{2}}$ (which is a Nash function) if necessary
we see that we can assume that the derivatives of $\tilde{\psi}_i$
are bounded, as required.

To finish the proof, it remains to show that $N_F\setminus
h_F(M_F)$ is containedi in  the set $$(J_F\cup
K_0(F))\times\{0\}.$$ Again thanks to  \L ojasiewicz inequality, we may
assume that the functions $\psi_i$ tend to zero at infinity and
near $Sing(F)$. Consequently, $N_F\setminus
h_F(M_F)$ coincides with the Jelonek set of $F$
together with $K_0(F)$, which is of codimension $2$ in $\R^{2n}$
thanks to Theorem \ref{jelonek}.
\end{proof}

%
%
%

\begin{rem}
From the proof we get immediately that the following diagram is
commutative:
\begin{center}
     \begin{picture}(-140,0)
      \put(-160,-50){$\R^{2n}$}
        \put(-88,-5){$Reg(N_F$)}
          \put(-68,-50){$\R^{2n}$}
     \put(-140,-46){\vector(3,0){65}}
  \put(-136,-35){\vector(2,1){50}}
      \put(-60,-10){\vector(0,-1){30}}
     \put(-57,-28){$\pi_F$}
\put(-138,-28){$h_F$}
     \end{picture}
    \end{center}
\vskip2cm

Where $\pi_F$ is locally bilipschitz.
\end{rem}

\begin{rem} If the Jacobian of $F$ is identically equal to $1$, then every point $x$ has a neighborhood ßUß such that the restriction of $F$ to this neighborhood preserves the Lebesgue measure. In this case the mapping $h_F$  preserves the volume of subsets of $U$ ``up to constant'' in the sense that there exist constants $C_1$ and $C_2$ such that for any subset $A$ of $U$ we hqve:$$C_1\mathcal{L}^n(A)\leq \mathcal{H}^n(h_F(A)) \leq C_2\mathcal{L}^n(A),$$
where $\mathcal{H}^n$ and $\mathcal{L}^n$ denote respecively the $n$-dimensional Hausdorff and Lebesgue measure.

\end{rem}


%
%

\begin{lem}
Let $F:\C^2\to\C^2$ be a polynomial map then there exists a
natural stratification of the set $N_F$, with only even dimensional strata, 
which is topologically trivial along the strata.
\end{lem}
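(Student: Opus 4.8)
The plan is to build the stratification of $N_F$ out of the canonical semialgebraic stratification of the pair $(J_F\cup K_0(F), \R^{2n})$, lift it through $h_F$, and verify local topological triviality along each stratum, exploiting the bilipschitz map $h_F$ on the regular part and the $\C$-uniruledness of $J_F$ (Theorem \ref{jelonek}) to control the strata lying in the singular locus. First I would stratify $\R^{2n}=\R^4$ compatibly with the semialgebraic set $S:=J_F\cup K_0(F)$: take a semialgebraic stratification of $\R^4$ for which $S$ is a union of strata and which is Whitney (a)-(b) regular, hence locally topologically trivial along the strata by Thom--Mather. Since $J_F$ is a pure $1$-dimensional $\C$-algebraic variety and $K_0(F)$ is a proper algebraic subvariety of $\C^2$, also of (complex) dimension at most $1$, their union $S$ has real dimension $\le 2$; we may moreover refine so that each stratum contained in $S$ is a smooth \emph{complex} submanifold of $\C^2$, hence of even real dimension $0$ or $2$. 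The strata of $N_F$ are then declared to be: the preimages $h_F^{-1}(\sigma)\subset \mathrm{Reg}(N_F)$ — equivalently, under the commutative diagram of the Remark, the sets mapping to the open strata of $\R^4\setminus S$ — together with the pieces of $N_F\setminus h_F(M_F)\subset S\times\{0\}$, which by the Proposition coincide with the even-dimensional complex strata of $S$. By construction every stratum has even real dimension.

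Next I would check that this is a genuine filtered pseudomanifold. The singular locus is contained in $S\times\{0\}$, which has real codimension $2$ in $N_F$ (the real dimension of $N_F$ is $2n=4$, that of $S$ is $\le 2$), so $N_F$ is a pseudomanifold in the sense of Definition \ref{del pseudomanifolds}, and $\mathrm{Reg}(N_F)=h_F(M_F)$ is dense and connected because $M_F=\C^2\setminus \mathrm{Sing}(F)$ is (the complement of a hypersurface in $\C^2$). The filtration $N_{F,0}\subset N_{F,2}\subset N_{F,4}=N_F$ is obtained by taking closures of unions of strata of each real dimension; I would verify the frontier condition, i.e.\ that the closure of each stratum is a union of strata, by pushing forward the frontier condition from the stratification downstairs via the proper (on the relevant pieces) structure of $N_F$ over $\R^4$ encoded in $\pi_F$.

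The heart of the matter — and the main obstacle — is local topological triviality of $N_F$ along the strata contained in $S\times\{0\}$, i.e.\ producing, near a singular point $z_0\in S\times\{0\}$ lying in a stratum $W$, a homeomorphism of a neighborhood onto $\R^{\dim W}\times \mathrm{cone}(L)$ respecting strata. Along the open strata of $\mathrm{Reg}(N_F)$ this is free since $h_F$ is a bilipschitz, hence homeomorphic, parametrization by the smooth manifold $M_F$; the issue is genuinely at infinity and along $K_0(F)$. For the $K_0(F)$ part one can work locally in the source: near a point of $\mathrm{Sing}(F)$ the map $h_F$ extends (the $\psi_i$ were arranged to tend to $0$ there), and local triviality of $N_F$ transfers from a Whitney stratification of $\mathrm{graph}(F)$ near $\mathrm{Sing}(F)$ via $h_F=\widehat\psi\circ F$. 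For the part of $S\times\{0\}$ coming from the Jelonek set — the genuinely ``at infinity'' piece — I would use Jelonek's structure theorem: through each point of $J_F$ runs a rational curve $\varphi_x:\C\to J_F$, and one pulls back the corresponding family of curves going to infinity in $M_F$; combined with semialgebraic triviality (Hardt's theorem) applied to the proper map $\pi_F:N_F\to\R^4$ over a stratum of $S$, this yields a locally trivial cone structure of $N_F$ transverse to $W$. Thus the plan is: (i) Whitney-stratify $\R^4$ compatibly with $S$ by even-real-dimensional complex strata; (ii) pull back to get strata of $N_F$ and check pseudomanifold axioms and the frontier condition; (iii) get triviality on $\mathrm{Reg}(N_F)$ from $h_F$, on the $K_0(F)$-strata from a Whitney stratification near $\mathrm{Sing}(F)$, and on the $J_F$-strata from Hardt triviality of $\pi_F$ together with Jelonek's uniruling; the $J_F$ case is where the real work lies.
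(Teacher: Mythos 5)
There is a genuine gap, and it sits exactly where you admit ``the real work lies'': the local triviality of $N_F$ along the two--dimensional strata coming from $J_F$. Your proposed mechanism does not do the job. Stratifying $\R^4$ Whitney--compatibly with $S=J_F\cup K_0(F)$ and invoking Thom--Mather gives triviality \emph{downstairs}, which is irrelevant; pulling that stratification back through $\pi_F$ does not give a Whitney (or otherwise controlled) stratification of $N_F$, because near $S\times\{0\}$ the map $\pi_F$ is only a covering away from $S$ and its sheets collapse onto the stratum. Hardt's theorem, applied to $\pi_F$ (or to the composition with a projection onto a stratum $W$), only produces triviality over the pieces of a finite semialgebraic partition that \emph{it} chooses: it may subdivide $W$ along a subset of dimension $1$, and you have no control over that exceptional set. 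So this argument neither proves triviality along all of $W$ nor is compatible with the requirement that all strata be even--dimensional; the uniruledness of $J_F$ is invoked but never actually used in a concrete step.

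The missing idea, which is the heart of the paper's proof, is to control where the \emph{other} sheets of $N_F$ lying over $J_F$ accumulate on the singular stratum: set $A=\pi_F^{-1}(J_F)\setminus J_F\times\{0\}$ and $B=\mathrm{cl}(A)\cap\bigl(J_F\times\{0\}\bigr)$. The paper shows $B$ is \emph{finite}, because any point of $B$ is an asymptotic value of the restriction of $F$ to the complex algebraic curve $F^{-1}(J_F)$ (using that $h_F$ is injective and nonsingular, the preimage of a curve in $A$ ending on $J_F\times\{0\}$ must escape to infinity). One then puts $N_0=B\cup \mathrm{Sing}(J_F\cup K_0(F))$ into the zero--dimensional stratum, and along $(J_F\cup K_0(F))\setminus N_0$ proves triviality by an explicit construction: lift the two translation isotopies along the stratum through the covering $\pi_F$ over $U\setminus S$, and extend them continuously using $\pi_F(x;0)=x$ on the stratum; choosing $U$ disjoint from $B$ is precisely what makes the extension work and keeps the isotopies from falling into $J_F\times\{0\}$. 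Your stratification, which takes only the complex--analytic strata of $S$ and ignores $B$, need not be locally topologically trivial at the points of $B$, and nothing in your outline detects or excludes this; without the finiteness of $B$ (or some substitute showing the bad locus is even--dimensional and then dealing with it as a stratum), the statement as you argue it is not established.
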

\begin{proof}
 Consider the set
$$A:=\pi_F^{-1}(J_F)\setminus J_F \times \{0_{\R^{\nu-4}}\}.$$
We first check the following fact:

\noindent {\bf Claim.} The set $B:=cl(A)\cap J_F \times \{0_{\R^{\nu-4}}\} $ is finite.

It is enough to show that it is included in the set of asymptotical values of $F_{|F^{-1}(J_F)}$ (for $F^{-1}(J_F)$ is a complex algebraic curve). Let $x \in B$. There exists a (real analytic) curve un $\alpha$ in $A$ tending to $x$ . Clearly $\pi_F(\alpha(s)) \in J_F$ entails $h_F^{-1}(\alpha(s)) \in  F^{-1}(J_F) $.  As $\alpha$ ends at a point of the singular locus of $N_F$ and $h_F$ is one-to-one and  nowhere singular, the preimage of $\alpha$ by $h_F$ must go to infinity, which means that $\pi_F(x)=x$ is an asymptotical value of $F_{|F^{-1}(J_F)}$.

Let now $$N_0=B \cup Sing(J_F\cup K_0(F)), $$

and take $x_0 \in (J_F\cup K_0(F))\setminus  N_0$.

We have to show that every point $x_0$, has a  neighborhood in $N_F$ which is topologically trivial.  As we will also show that this topological trivialization preserves $J_F$  this will show that the stratification given by $(J_F\cup
K_0(F);N_0)$ is locally topologically trivial along the strata.

Since the problem is purely local we may identify $J_F\times \{0\}$ with $\R^{2}\times \{0\}$  and  
work nearby the origin.  Consider now the following  local isotopies
$$ \mu_i :U \times [-\varepsilon ;\varepsilon] \to U,$$
where $U$ is a neighborhood of $0$ in $\R^4$, defined by  $\mu^i _i(x;t)=x+t e_i$, $i=1,2$. Choosing $U$ small enough, we can assume $U\cap B=\emptyset$.  Over the complement of $(K_0(F)\cup J_F)\cap U$, these isotopies may be lifted to isotopies on $\pi_F^{-1}(U)$ for $\pi_F$ induces a covering map.

We denote this lifting $\tilde{\mu}_i$.  On $U \cap (J_F\cup K_0(F))\times \{0\}$, $\pi_F(x;0)=x$, so that each $\tilde{\mu}_i$ extends continuously. As $\pi_F$ is  a local homeomorphism at any point of $$Reg(N_F\setminus ((K_0(F)\cup J_F) \times \{0\}), $$ each $\tilde{\mu}_i$  extends continuously when $x$ tends to any point $\pi_F^{-1}(U\cap J_F)$. The obtained isotopies may not fall into $J_F \times \{0\}$ since $U$ does not meet the set $B$.

\end{proof}

\section{Non properness and vanishing cycles.} In the case $n=2$, the singular homology as well as the intersection homology of the constructed set $N_F$ captures the behavior at infinity of $F$. If $F$ is proper then $N_F $ is a ball in  $\R^4$ and thus $$H_2(N_F)=IH_2 ^{\overline{t}} (N_F)=0.$$
We are going to show that the converse is also true (Theorem \ref{thm homology propre}).

For this purpose we start by proving

\begin{lem}\label{lem sigma borde qqch fibre en 0}
Let $\alpha:T_j \to \R^m$ be a singular  $j$-chain and let $A \subset \R^m \times \R$ be a compact  semi-algebraic family of sets with $|\beta|\subset A_t$ for any $t$. Assume that $\beta$ bounds a $(j+1)$ in each $A_t$, $t>0$. Then $\beta$ bounds a chain in $A_0$.
\end{lem}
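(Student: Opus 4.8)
The statement is a standard "closing up of families" argument in semi-algebraic geometry, with the twist that we must produce a single chain in the limit fiber $A_0$ whose boundary is the given $\beta$. First I would set up the ambient family: since $A \subset \R^m \times \R$ is a compact semi-algebraic family, by semi-algebraic triviality (Hardt's theorem) there exists $\delta > 0$ such that the family $\{A_t\}_{0 < t \le \delta}$ is semi-algebraically trivial over $(0,\delta]$; in particular all these $A_t$ are semi-algebraically homeomorphic, and the hypothesis that $\beta$ bounds in each $A_t$ for $t>0$ is consistent. I would then work with a fixed $t_0 \in (0,\delta]$ and a chain $\gamma_0 \in C_{j+1}(A_{t_0})$ with $\partial \gamma_0 = \beta$ (viewing $\beta$ as sitting in every fiber, which makes sense because $|\beta| \subset A_t$ for all $t$, including $t=0$).

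The core of the argument is to let $t_0 \to 0$ and extract a limit chain. The clean way to do this is to replace singular chains by semi-algebraic chains: by the semi-algebraic approximation of singular homology (or simply by triangulating the compact semi-algebraic set $A$ compatibly with the function $t$ and with $|\beta|$), one may assume $\gamma_0$ is a semi-algebraic chain, and moreover that as $t_0$ varies in $(0,\delta]$ the chains $\gamma_{t_0}$ can be chosen to form a semi-algebraic family $\Gamma \subset A \times (0,\delta]$ with $\partial_{\mathrm{fiber}} \Gamma_{t_0} = \beta$ for each $t_0$. Concretely: triangulate $A$ so that $t$ is affine on each simplex and $A_0$, $|\beta|$ are subcomplexes; on each fiber $A_{t_0}$ with $t_0>0$ the hypothesis gives a filling, and by the triviality above the filling over one fiber transports to a filling over all fibers, assembling into the family $\Gamma$. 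Now take the limit: $\gamma_0 := \lim_{t_0 \to 0^+} \Gamma_{t_0}$, interpreting the limit as the "special fiber'' of the closure $\overline{\Gamma} \subset A \times [0,\delta]$ over $t = 0$. Because $\overline{\Gamma}$ is a compact semi-algebraic set and $A_0$ is closed, this limit chain has support in $A_0$. Finally, $\partial$ commutes with taking the limit of the family (the boundary of the special fiber of $\overline{\Gamma}$ is the special fiber of $\overline{\partial \Gamma}$, and $\partial \Gamma_{t_0} = \beta$ independently of $t_0$, so its limit is again $\beta$), which gives $\partial \gamma_0 = \beta$ in $A_0$, as desired.

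The main obstacle is the step where one argues that the boundary operator passes to the limit, i.e. that $\partial(\lim \Gamma_{t_0}) = \lim(\partial \Gamma_{t_0})$. This is where one genuinely uses the semi-algebraic family structure rather than just fiberwise information: a priori the supports $|\Gamma_{t_0}|$ could "escape'' or degenerate as $t_0 \to 0$, and only the uniform control coming from compactness of $A$ together with semi-algebraic triviality prevents pathological behavior. I would handle this by choosing, from the start, the triangulation of the total space $\overline{\Gamma}$ so that the closure is itself a finite semi-algebraic complex and the specialization map on chains is literally restriction to the $t=0$ subcomplex, for which commutation with $\partial$ is formal. An alternative, perhaps cleaner, route avoiding explicit chains is to invoke the continuity/exactness properties of Borel--Moore intersection homology for semi-algebraic families (as in \cite{gm2}), but since the statement is phrased in terms of ordinary singular chains I would keep the triangulation argument, which is elementary and self-contained.
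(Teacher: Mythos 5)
Your overall strategy --- assemble the fillings into a semi-algebraic family and specialize at $t=0$ --- can in principle be pushed through, but as written the decisive step is missing rather than proved. The object you call $\gamma_0:=\lim_{t_0\to 0^+}\Gamma_{t_0}$ is defined only as the special fiber of the closure $\overline{\Gamma}$, i.e.\ as a semi-algebraic \emph{set}; the lemma requires a $(j+1)$-\emph{chain} in $A_0$ with boundary $\beta$, and the set-theoretic limit carries no chain structure: its dimension can drop below $j+1$ (the fillings may collapse onto lower-dimensional parts of $A_0$), and where the dimension does not drop the multiplicities are not determined by the supports. Consequently the identity $\partial(\lim\Gamma_{t_0})=\lim(\partial\Gamma_{t_0})$ is not a formal consequence of triangulating $\overline{\Gamma}$: after a triangulation compatible with $t$, only the fiber $\overline{\Gamma}_0$ is a subcomplex, while the fibers $\Gamma_{t_0}$ for small $t_0>0$ are not, so ``restriction to the $t=0$ subcomplex'' is not an operation defined on the singular (or simplicial) chains you have. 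Making your specialization rigorous needs either a genuine theory of semi-algebraic chains equipped with a limit homomorphism commuting with $\partial$ --- which is essentially the content one is trying to establish --- or an argument pushing a nearby fiber into $A_0$. (A smaller gap of the same nature: the Hardt trivialization transports a filling of $\beta$ in $A_{t_0}$ to a filling of $(h_t)_\#\beta$, not of $\beta$; this is repairable, since the trivialization can be taken compatible with $|\beta|\times(0,\delta]$ and $(h_t)_\#\beta-\beta$ bounds a prism chain supported in $|\beta|$, but it is not addressed.)

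The paper avoids all of this with a much shorter argument that you could adopt: triangulate the compact set $A$ semi-algebraically so that $A_0$ is a union of (images of) simplices; then a neighborhood $U$ of $A_0$ in $A$ admits a strong deformation retraction $r:U\to A_0$, and by compactness $A_t\subset U$ for all $t$ small enough. Taking one filling $\gamma_t\in C_{j+1}(A_t)$ of $\beta$ for such a $t>0$ and pushing it forward, $r_\#\gamma_t$ is a chain in $A_0$ with $\partial r_\#\gamma_t=r_\#\beta=\beta$, because $r$ is the identity on $A_0\supset|\beta|$. This uses a single fiber, no family of fillings, no Hardt triviality and no limit of chains; indeed, if you try to make your ``restriction to the $t=0$ subcomplex'' precise, the natural implementation is exactly such a neighborhood retraction, so the rigorous version of your proof collapses onto the paper's.
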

\begin{proof}  Given a compact family of  sets $A \subset \R^n \times
[0;\varepsilon]$, there exists a  semi-algebraic  triangulation of $A$ such that $A_0$ is a union of image of simplices (here $A_0$ denotes the zero fiber of the family $A$). Therefore, there exists a  continuous strong deformation
retraction of a neighborhood $U$ of $A$ onto $A_0$.  As $A_t$ lies in $U$ for $t$  small enough, the lemma ensues.
\end{proof}

\begin{thm}\label{thm homology propre}
Let $F:\C^2\to\C^2$ be a polynomial map with nowhere vanishing
Jacobian.  Fix a perversity $\bar p$. The following conditions are equivalent:
\begin{enumerate}
\item $F$ is non proper, 

\item $H_2(N_F)\neq 0,$

\item $IH^{\overline{p}} _{2} (N_F) \neq
 0.$



 \end{enumerate}
\end{thm}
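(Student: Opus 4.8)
The plan is to prove the cycle of implications $(1)\Rightarrow(2)\Rightarrow(3)\Rightarrow(1)$ (together with the easy implication from $(3)$ back to non-properness for the two middle perversities, or directly establishing all equivalences through the listed chain). The geometric heart of the matter is that, under the nonvanishing Jacobian hypothesis, $F$ is a local diffeomorphism everywhere, so $Sing(F)=\emptyset$, $K_0(F)=\emptyset$, $M_F=\C^2$ with the pulled-back metric, and $N_F$ is a $4$-dimensional semi-algebraic pseudomanifold whose singular set is contained in $J_F\times\{0\}$, which by Theorem \ref{jelonek} is a (possibly empty) pure $1$-dimensional $\C$-uniruled curve, i.e.\ real dimension $2$, hence of codimension $2$ in $N_F$ as required. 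If $F$ is proper, then $J_F=\emptyset$, $N_F=\overline{h_F(\C^2)}$ is a smooth semi-algebraic manifold bilipschitz to $\R^4$; one checks it is in fact a closed ball, so all its homology and intersection homology vanish in degree $2$, which is the trivial direction already noted in the text.

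For $(1)\Rightarrow(2)$: assume $F$ is non proper, so $J_F\neq\emptyset$ and is a $\C$-uniruled curve. First I would argue that $H_2(N_F)\neq 0$ by producing an explicit nonzero $2$-cycle. Pick a generic point $y_0\in J_F$ (outside the finite bad set $N_0$ of the previous lemma). Near $y_0$ the set $N_F$ looks, after the local topological trivialization of the preceding lemma, like a product of a small disk in $J_F$ with the cone over the link of $y_0$ inside the normal slice; the link is a finite union of circles coming from the sheets of the covering $\pi_F$ over the punctured normal disk, glued at the apex. Because $\pi_F$ restricted over a small punctured $2$-disk transverse to $J_F$ is a nontrivial covering (the fibre of $F$ escapes to infinity along $J_F$, exactly as in the Claim in the previous lemma), at least two sheets come together at $y_0$, and this forces the link to be connected while the generic fibre over the punctured disk is disconnected. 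Taking a small $2$-sphere $S$ in $N_F$ that is the boundary of a transverse $3$-ball meeting the singular curve in the single point $y_0$, one shows $[S]\neq 0$ in $H_2(N_F)$: if $S=\partial c$ for some $3$-chain $c$ in $N_F$, then pushing $c$ off the $2$-dimensional singular set (generic position, since $\dim Sing(N_F)=2$ and $3+2<5$ fails — here one must instead use that $c$ can be homotoped to meet $Sing(N_F)$ in dimension $\le 0$ because $N_F$ deformation retracts onto a lower-dimensional subcomplex away from $S$, or argue via a Mayer--Vietoris / long exact sequence of the pair $(N_F, N_F\setminus Sing(N_F))$) yields a contradiction with the nontriviality of the covering. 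An alternative, probably cleaner, route is: since $N_F$ is bilipschitz, hence homeomorphic, to the one-point-type compactification obtained from $\C^2$ by adjoining $J_F$, use Theorem \ref{thm_intro_linfty} together with the $L^\infty$ de Rham theorem to replace a homology computation by an explicit bounded closed $2$-form on $Reg(N_F)\cong M_F$ that is not exact with bounded primitive — concretely a form supported near infinity in the direction of $J_F$ — and conclude $I^{\bar t}H^2(N_F)\neq 0$, which gives both $(2)$ and $(3)$ for $\bar p=\bar t$; the remaining perversities are then handled by Poincar\'e duality \eqref{eq poincare duality} and the functoriality relating the perversities via the standard comparison maps (noting all strata have even codimension).

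For $(3)\Rightarrow(1)$ (equivalently $\neg(1)\Rightarrow\neg(3)$): if $F$ is proper then $J_F=\emptyset$, $N_F$ is a smooth semi-algebraic manifold bilipschitz-equivalent to $\R^4$, and being the closure of the image $h_F(\C^2)$ it is a closed semi-algebraic manifold with no singular points; one argues it is semi-algebraically homeomorphic to the closed $4$-ball (it is a compact contractible semi-algebraic $4$-manifold with boundary; contractibility comes from the bilipschitz, hence homotopy, equivalence with $\R^4$), so $IH^{\bar p}_2(N_F)=H_2(\text{ball})=0$ for every perversity, and likewise $H_2(N_F)=0$. Finally $(2)\Leftrightarrow(3)$ in the non-proper case follows because the only singularities are along the even-codimensional stratum $J_F\times\{0\}$, where the cone formula for intersection homology shows that the natural map $IH^{\bar p}_2(N_F)\to H_2(N_F)$ detects exactly the same vanishing-cycle classes for every perversity $\bar p$; the point is that the link of the singular curve is $1$-dimensional, so the perversity conditions $\dim(|c|\cap X_{2})\le 2-k+p_k$ impose nothing extra on $2$-chains or their $1$-dimensional boundaries beyond genericity, making all $IH^{\bar p}_2$ coincide with each other and with $H_2$.

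\textbf{Main obstacle.} The delicate step is $(1)\Rightarrow(2)$: producing a provably nonzero $2$-cycle. The difficulty is twofold — first, identifying precisely the local normal structure of $N_F$ along the singular curve $J_F\times\{0\}$ (how many sheets of $\pi_F$ come together, what the link looks like), which is where Jelonek's description of $J_F$ as $\C$-uniruled and the Claim from the previous lemma (fibres escaping to infinity along $J_F$) must be used quantitatively; and second, showing the resulting local class survives in global homology, for which I expect the cleanest argument is to bypass cycles entirely and invoke the $L^\infty$-cohomology / de Rham isomorphism of Theorem \ref{thm_intro_linfty}, exhibiting a bounded, bounded-differential closed $2$-form on $M_F$ with no bounded primitive — the existence of such a form being morally equivalent to the statement that $F$ genuinely fails to be proper.
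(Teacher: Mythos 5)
There is a genuine gap, and it sits exactly where you flagged it: the nontriviality of the $2$-cycle in the non-proper case is never proved, and the route you sketch would not produce it. Your candidate cycle already has a dimension problem: in the $4$-dimensional pseudomanifold $N_F$ the singular set lies in the $2$-dimensional curve $J_F\times\{0\}$, so a $3$-ball transverse to it meets it in dimension $1$ (not in a single point), a transverse slice to the stratum is $2$-dimensional with a $1$-dimensional link (circles, not a $2$-sphere), and, as you yourself note, general position cannot push a $3$-chain off a $2$-dimensional singular set inside a $4$-dimensional space. The fallback suggestions (Mayer--Vietoris, ``one shows $[S]\neq 0$'', an $L^\infty$ closed $2$-form ``supported near infinity in the direction of $J_F$'' with no bounded primitive) are placeholders for precisely the statement to be proved; in particular the $L^\infty$/intersection-cohomology isomorphism of Theorem \ref{thm_intro_linfty} only translates the problem, it does not solve it. The paper's proof is different in both the choice of cycle and the non-bounding argument: the cycle is \emph{not} a local link cycle around a generic point of $J_F$, but is built from a Puiseux arc $\gamma$ going to infinity along which $F(\gamma)$ converges (the very witness of non-properness); $h_F\circ\gamma$ extends to $0$ and gives a $2$-simplex $c$ meeting $J_F$ only at one point, which is then capped off inside $N_F\setminus J_F\simeq\R^4$ to get a cycle $\sigma$ allowable for every perversity (this is also how $(1)\Rightarrow(3)$ is obtained for all $\bar p$ at once, rather than via Poincar\'e duality and comparison maps as you propose). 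The heart of the proof is then the argument at infinity showing $\sigma$ cannot bound: intersect a hypothetical $3$-chain $\tau$ with large spheres $S(0;R)$, rescale, observe that the tangent cone at infinity of anything with bounded image under $F$ lies in the zero set $V$ of the initial forms $(\hat F_1;\hat F_2)$, retract onto the circle $S^1\cdot a$ determined by the leading term of $\gamma$, and use the semialgebraic-family limit lemma (Lemma \ref{lem sigma borde qqch fibre en 0}) to conclude that the generator of $H_1(S^1\cdot a)$ would bound a chain inside the $1$-dimensional set $V\cap S^3$, a contradiction. None of this mechanism (initial forms, tangent cone at infinity, the limit lemma) appears in your proposal, and it is exactly what fills the gap you identify as the ``main obstacle''.

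Two smaller points. First, in the proper case $N_F$ is homeomorphic to $\C^2$, hence noncompact; your description of it as a compact contractible $4$-manifold with boundary (a closed ball) is incorrect, though the conclusion $H_2=IH_2^{\bar p}=0$ is what the paper uses and is fine. Second, your claimed equivalence $(2)\Leftrightarrow(3)$ via a cone-formula argument is not needed in the paper's scheme (and as stated, the assertion that the perversity conditions ``impose nothing extra'' on $2$-chains is not justified): the paper proves $(1)\Rightarrow(2)$ and $(1)\Rightarrow(3)$ directly with the same allowable cycle, and gets $(2)\Rightarrow(1)$, $(3)\Rightarrow(1)$ from the properness computation.
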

\begin{proof} We first show that $(1)$ implies $(2)$.  As the cycle that we will exhibit will be $(\bar{p};2)$ allowable for any perversity $\bar{p}$, this will also establish that  $(1)$ implies $(3)$.

  Assume that $F$ is not proper.  This means that there exists a Puiseux arc $\gamma:\mathbb{D}(0;\eta) \to \R^4$,
 $$\gamma(z)=a z^\alpha +\dots, $$
 (with $\alpha$ negative and $a \in \R^4$  unit vector) tending to infinity in such a way
that $F(\gamma)$ converges to an element $x_0 \in \R^4$.

Let $C$ be an oriented triangle in $\R^4$ whose barycenter is the origin. Then, as $h_F \circ \gamma$ extends continuously at $0$, the map  $h_F\circ \gamma$, provides a singular $2$-simplex in $N_F$ that we will denote by $c$. This simplex is $(\overline{t};2)$ allowable since $$J_F \cap |c|=\{x_0\}.$$ The support of $\partial c$ lies in $N_F \setminus J_F$. As, by definition of $N_F$,  $N_F \setminus J_F \simeq \R^4$, this means that $\partial c$ bounds a singular chain $e \in C_3(N_F \setminus J_F)$. But then $\sigma=\partial c-e$ is a $(\overline{t};2)$-allowable cycle of $N_F$. We claim that $\sigma$ may not bound a  $3$-chain in $N_F$.

 Assume otherwise, it means that  there exists a  chain $\tau \in C_3(N_F)$, satisfying $\partial \tau=\sigma$.



For $R$ large enough, the sphere $S(0;R)$ is transverse to $A$ and $B$ (at regular points). Therefore, after a triangulation, the intersection $\sigma_R:=S(0;R)\cap A$ is a  chain bounding the chain $\tau_R:=S(0;R) \cap B$.  

Given a subset $A\subset \R^{4}$ we define the ``tangent cone at
infinity" by: $$C_\infty (A):= \{\lambda \in S^3 : \exists
\gamma:(0;\varepsilon] \to A \: \mbox{Nash},\: \lim_{t \to 0}
\gamma(t) =\infty, \: \lim_{t \to 0}
\frac{\gamma(t)}{|\gamma(t)|}=\lambda\}.$$

Let $\hat{F}_1$ and $\hat{F}_2$ be the initial forms of the
components of $F:=(F_1;F_2)$ and let $\alpha$ be any analytic arc in $\R^4$, tending to infinity and such that $F(\gamma)$ does not tend to infinity. Then,  for any $i=1,2$, the
Puiseux expansion of  $\hat{F}_i(\gamma(z))$ must start like:
\begin{equation}\label{eq expansion de F_i}\hat{F}_i(\gamma(z))=F_i(a)
z^\beta+\dots,\end{equation} for some negative integer $\beta$. As
the arc $F(\gamma)$ does not diverge to infinity as $z$ tends to
zero, we see that
$$F_i(a)=0, \qquad \qquad i=1,2.$$

Therefore, if a set $X\subset \R^4$ is mapped onto a bounded set by $F$ then  $C_\infty(X)$ is included  in the zero
locus $V$ of $\hat{F}:=(\hat{F}_1;\hat{F}_2)$. Hence $C_\infty(A)$ and $C_\infty(B)$ are subsets of $V\cap S^3$. Observe that, in a neighborhood of infinity,  $A$ coincides with the support of the complex analytic arc $\gamma$. Thus $C_\infty(A)$ is nothing but $S^1\cdot a$.





Consider  a Nash strong deformation retraction $\rho:W \times[0;1]  \to S^1 \cdot a$, of a
neighborhood $W$ of $\C a \cap S^{3}$   in the sphere onto $S^1\cdot a$.

Let $\tilde{\sigma}_R=\frac{\sigma_R}{R}$. The image of the restriction of $\gamma$ to a sufficiently small
neighborhood of $0$ in $\C$ entirely lies in the cone over $W$. Consequently, $|\tilde{\sigma}_R|$ is included in $W$, for $R$ large enough.

Let
$\sigma' _R$ be the image of $\tilde{\sigma}_R$ under the retraction $\rho_1$ (where $\rho_1(x):=\rho(x;1)$).
As, near infinity,  $|\sigma_R|$ coincides with the support of the arc $\gamma$,  for $R$ large enough the class of $\sigma'_{R} \cap
S^{3} (0;R)$ is nonzero is $S^1 \cdot Ra$.

 As the retraction $\rho_1$ is isotopic to the identity,
there exists a chain $\theta \in $ such that:
$$\partial \theta_R=\sigma'_R-\tilde{\sigma}_R.$$

The sets  $|\theta_R|$ constitute a semi-algebraic family of sets. Denote by  $E \subset \R^4 \times \R$ its closure and set $E_0:=E \cap \R^4 \times 0$. As the strong deformation  retract $\rho$ is the identity on $S^1 \cdot a \times [0;1]$, we see that  \begin{equation}\label{eq cone de E} E_0\subset S^1 a \subset V\cap S^3. \end{equation}

Let now $\tilde{\tau}_R =\dfrac{\tau_R}{R}$ and:
$$\theta'_R:=\tilde{\tau}_R +\theta_R,$$
and denote by $E'$ the semi-algebraic family of sets constituted the closure of the union of all by the sets $|\theta'_R|$. Again denote by $E'_0$ the fiber at $0$, i. e. the set $E' \cap\R^4 \times \{0\}$.
By (\ref{eq cone de E}) and the definition of $\theta'$ we have \begin{equation}\label{eq theta'' et tau} E' _0\subset
E_0 \cup
C_\infty(B) \subset V\cap S^3,\end{equation} and $\partial \theta'_R
=\sigma'_R$.

The class of $\sigma'$   is, up to a product by a nonzero constant,
equal to the generator  of $H^1 (S^1 \cdot a)$.
It means that the cycle $S^1\cdot a$ bounds the  chain $\theta'_R$ which has support in  $E'_R \subset V\cap S^3$.
By Lemma \ref{lem sigma borde qqch fibre en 0} this implies that $S^1\cdot a$ bounds a chain in $E'_0 \subset V\cap S^3,$ a contradiction. 

If $F$ is proper map, then the set $N_F$ is homeomorphic to $\C^2$
and consequently $H_2(N_F)=IH^{\tb}_2(N_F)=0$. Thus, $(2)$ and $(3)$  both fail.

\end{proof}

\begin{obs}
The set $N_F^R:=N_F\cap \bar B (0,R)$ is a pseudomanifold with boundary
for large enough $R$.
\end{obs}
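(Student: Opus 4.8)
The plan is to deduce the pseudomanifold-with-boundary structure of $N_F^R$ from the transversality of a large sphere to $N_F$. Recall that $N_F$ is a semialgebraic set of dimension $2n$ (here $2n=4$), with $Reg(N_F)$ a dense $2n$-dimensional manifold and $Sing(N_F)\subset(J_F\cup K_0(F))\times\{0\}$ semialgebraic of dimension at most $2n-2$. Fix a semialgebraic stratification $\St$ of $N_F$ that is compatible with $Sing(N_F)$ --- so that $Reg(N_F)$ is a union of $2n$-dimensional strata and $Sing(N_F)$ a union of strata of dimension $\le 2n-2$ --- and that is locally topologically trivial along the strata (for instance the one provided by the lemma above, refined if necessary).

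First I would fix the radius. For each stratum $S\in\St$ the restriction to $S$ of $x\mapsto|x|$ is a semialgebraic $C^1$ function, hence by the semialgebraic Sard theorem (see \cite{bcr}) has finitely many critical values; since $\St$ is finite there is $R_0$ such that for every $R>R_0$ the sphere $S(0;R)$ is transverse to every stratum of $\St$. Fix such an $R$; if $N_F$ is bounded then $N_F^R=N_F$ is already a closed pseudomanifold and there is nothing to prove, so we may assume $N_F$ unbounded, whence $N_F\cap S(0;R)\neq\emptyset$. By Thom's first isotopy lemma (in its semialgebraic version), transversality to every stratum yields a stratum-preserving homeomorphism of a neighborhood of $N_F\cap S(0;R)$ in $N_F$ onto $\big(N_F\cap S(0;R)\big)\times(R-\varepsilon;R+\varepsilon)$, compatible with the norm; in particular $N_F\cap S(0;R)$ has a collar in $N_F^R$, the set $Reg(N_F)\cap S(0;R)$ is dense in $N_F\cap S(0;R)$, and $Sing(N_F)\cap S(0;R)$ has dimension $\le 2n-3$.

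Then I would verify that the pair $\big(N_F^R,\,\partial N_F^R\big)$ with $\partial N_F^R:=N_F\cap S(0;R)$ satisfies the definition of a pseudomanifold with boundary. The interior $N_F\cap B(0;R)$ is an open pseudomanifold: it is a $2n$-manifold away from $Sing(N_F)\cap B(0;R)$, which has dimension $\le 2n-2$. Near a point of $Reg(N_F)\cap S(0;R)$ transversality provides a chart of $N_F^R$ onto a half-space $\R^{2n-1}\times[0;\infty)$, so $N_F^R\setminus\big(Sing(N_F)\cap\overline{B}(0;R)\big)$ is a $2n$-manifold with boundary; it is dense in $N_F^R$ (since $Reg(N_F)$ is dense in $N_F$ and, via the collar, limiting sequences may be pushed onto $S(0;R)$), and its complement has codimension $\ge 2$. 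Finally $\partial N_F^R$ is a $(2n-1)$-dimensional pseudomanifold: $Reg(N_F)\cap S(0;R)$ is a dense $(2n-1)$-submanifold of it, while $Sing(\partial N_F^R)\subset Sing(N_F)\cap S(0;R)$ has dimension $\le 2n-3$, i.e. codimension $\ge 2$. Since $N_F^R$ is compact and semialgebraic, this finishes the proof.

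I expect the main obstacle to be the first step: obtaining, with a single large $R$, transversality of $S(0;R)$ to all strata simultaneously --- in particular to the strata of dimension $<2n$ forming $Sing(N_F)$. It is precisely this stratified transversality that forces $\dim\big(Sing(N_F)\cap S(0;R)\big)\le 2n-3$, hence keeps the singular set in codimension $\ge 2$ in the boundary, and that produces the collar. It is the same kind of transversality already invoked --- at a less formal level --- in the proof of Theorem \ref{thm homology propre} (``the sphere $S(0;R)$ is transverse to $A$ and $B$''), now applied to an entire stratification, and it is guaranteed in the semialgebraic category by the finiteness of critical values.
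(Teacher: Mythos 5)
Your argument is correct and reaches the same geometric conclusion, but it is instrumented differently from the paper. The paper's proof is two lines: choose $R$ so that it is not a critical value of the distance function on $N_F$ (which gives smoothness of $N_F^R$ along the boundary away from $J_F$), and then invoke Hardt's theorem to conclude that the level sets $N_F\cap\{\rho=R\}$ are pseudomanifolds forming a semialgebraically topologically trivial family (with the trivialization preserving $J_F$), from which the pair $(N_F^R;N_F\cap\{\rho=R\})$ is a pseudomanifold with boundary. You instead choose a stratification, use the semialgebraic Sard theorem stratum by stratum to get a single $R$ with stratified transversality of $S(0;R)$, and then apply Thom's first isotopy lemma to produce the collar, finishing with an explicit dimension count ($\dim(Sing(N_F)\cap S(0;R))\le 2n-3$) to verify the definition directly. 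What your route buys is the explicit codimension bookkeeping at the boundary, which the paper leaves implicit; what Hardt's theorem buys is that one gets semialgebraic triviality of the family of level sets near infinity without any regularity hypotheses on the stratification. On that last point there is a small imprecision in your write-up: Thom's first isotopy lemma requires a Whitney (b)-regular (or at least suitably controlled) stratification, and mere local topological triviality along the strata --- which is all the lemma preceding the observation provides --- is not enough to run the controlled-vector-field argument. This is easily repaired, since every semialgebraic set admits a semialgebraic Whitney stratification compatible with $Sing(N_F)$ (or one can simply replace the isotopy lemma by Hardt's theorem, as the paper does), but as stated that step leans on a hypothesis you have not secured. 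Note also that the paper additionally records that the trivialization can be taken to preserve $J_F$, which is not needed for the statement itself but is used later; your proof does not address it.
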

\begin{proof}
We are going to show that $N_F^R$ is a stratified pseudomanifold.
First of all,  observe that if $R$ is chosen large enough then $R$
is not a critical value of the distance function to the origin on the set $N_F$ and thus  $N_F ^R$ is a smooth
manifold with boundary at any point of $ N_F  \cap \{\rho=R\}
\setminus J_F  $. Furthermore, by Hardt's Theorem \cite{h}, the level surfaces  $N_F \cap
\{\rho=R\}$  are pseudomanifolds  constituting a semi-algebraically
topologically trivial family.  The
trivialization may be required to preserve $J_F$. Consequently, the set
$$(N_F^R;N_F \cap \{\rho=R\})$$ is a pseudomanifold with boundary.
\end{proof}

Now thanks to the de Rham theorem for $L^\infty$ forms (Theorem \ref{thm_intro_linfty}) we get the following immediate corollary.

\begin{cor}
\label{cor homology propre}
Let $F:\C^2\to\C^2$ be a polynomial map with nowhere vanishing
Jacobian.  The following conditions are equivalent:
\begin{enumerate}\item $F$ is nonproper,
 \item $H^2_\infty(Reg(N_F^R))\neq 0$.
 \end{enumerate}
\end{cor}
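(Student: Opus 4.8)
The plan is to deduce Corollary \ref{cor homology propre} directly from Theorem \ref{thm homology propre} and the de Rham theorem for $L^\infty$ forms (Theorem \ref{thm_intro_linfty}), using the Observation that $N_F^R$ is a compact subanalytic pseudomanifold with boundary for $R$ large. First I would invoke Theorem \ref{thm_intro_linfty} applied to the compact pseudomanifold with boundary $N_F^R$ in order to obtain, for every $j$, an isomorphism $H^j_\infty(Reg(N_F^R)) \simeq I^{\bar t}H^j(N_F^R)$; here one uses the remark following that theorem, namely that the de Rham isomorphism still holds in the presence of a nonempty boundary. Then I would pass from intersection cohomology to intersection homology by Poincar\'e--Lefschetz duality: since $N_F^R$ is an orientable compact pseudomanifold with boundary of real dimension $4$, stratified with only even-codimensional strata, the maximal-perversity intersection cohomology $I^{\bar t}H^2(N_F^R)$ is isomorphic to the zero-perversity intersection homology $I^{\bar 0}H_2$ of the pair, and in degree $2$ (the middle degree) this is in turn linked to $IH^{\bar p}_2(N_F)$ for any perversity $\bar p$ by the equivalence already proved in Theorem \ref{thm homology propre}.

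Concretely, the cleanest route avoids dragging in the boundary pair at all: I would note that $N_F^R$ is a strong deformation retract of $N_F$ compatible with the stratification (this is exactly the content of the Observation together with Hardt's theorem, which gives a trivialization of the family of level sets $N_F \cap \{\rho = R\}$ preserving $J_F$), so that $IH^{\bar p}_2(N_F^R) \simeq IH^{\bar p}_2(N_F)$ for every perversity $\bar p$; in particular the vanishing or non-vanishing of $IH^{\bar t}_2$ is unchanged by truncation. Combining this with the chain of isomorphisms
$$ H^2_\infty(Reg(N_F^R)) \simeq I^{\bar t}H^2(N_F^R) \simeq IH^{\bar 0}_2(N_F^R) \simeq IH^{\bar 0}_2(N_F), $$
the equivalence $(1)\Leftrightarrow(3)$ of Theorem \ref{thm homology propre} (valid for the perversity $\bar 0$, since the statement is for an arbitrary fixed $\bar p$) yields: $F$ is nonproper $\iff IH^{\bar 0}_2(N_F) \neq 0 \iff H^2_\infty(Reg(N_F^R)) \neq 0$, which is exactly the asserted equivalence $(1)\Leftrightarrow(2)$.

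The step I expect to require the most care is the passage from $N_F$ to its truncation $N_F^R$: one has to be sure that the deformation retraction furnished by Hardt's theorem is stratum-preserving and cellular enough to induce an isomorphism on intersection homology, and that $N_F^R$ genuinely satisfies the hypotheses of Theorem \ref{thm_intro_linfty} (compact subanalytic pseudomanifold, possibly with boundary) — both of which are supplied by the Observation, but the invariance of intersection homology under such a retraction is the one nontrivial ingredient. The remaining ingredients — the de Rham isomorphism of Theorem \ref{thm_intro_linfty}, Poincar\'e--Lefschetz duality in the form (\ref{eq poincare duality}) exchanging complementary perversities on the orientable compact pseudomanifold with boundary $N_F^R$, and the already-established Theorem \ref{thm homology propre} — are quoted verbatim, so the proof is genuinely immediate once the truncation issue is dispatched.
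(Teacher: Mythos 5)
Your overall strategy is the one the paper intends (it states the corollary as an immediate consequence of Theorem \ref{thm homology propre}, the Observation, and Theorem \ref{thm_intro_linfty}), but the pivotal step of your chain is not justified and is in fact false in the generality in which you invoke it. You pass from $I^{\bar t}H^2(N_F^R)$ to $IH^{\bar 0}_2(N_F^R)$ by ``Poincar\'e--Lefschetz duality'' applied in absolute form to a \emph{compact pseudomanifold with boundary}. The duality quoted in the paper (equation (\ref{eq poincare duality})) is for compact pseudomanifolds without boundary (or Borel--Moore chains in the non-compact case); with boundary, duality exchanges absolute groups with groups of the pair $(N_F^R,\partial N_F^R)$, and the absolute--absolute statement $I^{\bar t}H^2 \simeq I^{\bar 0}H_2$ fails in general. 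A concrete counterexample: let $X=cM$ be the closed cone over $M=S^1\times S^2$, a compact $4$-dimensional pseudomanifold with boundary $M$ and one singular point. The cone formula gives $I^{\bar 0}H_2(X)\simeq H_2(M)\simeq\R\neq 0$ while $I^{\bar t}H_2(X)=0$, hence $I^{\bar t}H^2(X)=0$ with real coefficients. Your first version (``of the pair'') acknowledges the relative groups but then Theorem \ref{thm homology propre} says nothing about relative intersection homology, so the link you assert there is also not established; the ``cleanest route'' simply drops the pair and thereby uses the invalid absolute duality.

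The irony is that no perversity switch is needed at all: Theorem \ref{thm homology propre} is stated for an \emph{arbitrary} perversity, in particular for $\bar t$. The correct immediate argument is: $H^2_\infty(Reg(N_F^R))\simeq I^{\bar t}H^2(N_F^R)$ by Theorem \ref{thm_intro_linfty} (with boundary, as the paper remarks); since the intersection chain complex is taken with coefficients in $\R$, $I^{\bar t}H^2(N_F^R)\neq 0$ if and only if $I^{\bar t}H_2(N_F^R)\neq 0$ (universal coefficients over a field); and $I^{\bar t}H_2(N_F^R)\simeq I^{\bar t}H_2(N_F)$ by the truncation/retraction argument you correctly identify (Hardt trivialization of the levels $N_F\cap\{\rho=R\}$ preserving $J_F$, so the retraction is stratum-preserving and respects allowability). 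Then the equivalence $(1)\Leftrightarrow(3)$ of Theorem \ref{thm homology propre} with $\bar p=\bar t$ finishes the proof. So your truncation step and your use of the de Rham theorem are fine and are exactly what the paper leaves implicit; replace the duality step by the same-perversity cohomology/homology comparison and the proof is correct.
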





\end{document}